\documentclass[a4paper,11pt,twoside,reqno]{amsart}

\usepackage[plainpages=false,pdfpagelabels=true]{hyperref}
\usepackage{amssymb,amsthm}
\usepackage[margin=1in]{geometry}

\newtheorem{Satz}{Theorem}[section]
\newtheorem{Prop}[Satz]{Proposition}
\newtheorem{Lem}[Satz]{Lemma}

\newtheorem{Cor}[Satz]{Corollary}
\newcommand{\grad}{{\operatorname{grad}}}
\newcommand{\vol}{{\operatorname{Vol}}}

\theoremstyle{definition}
\newtheorem{Dfn}[Satz]{Definition}
\newtheorem{Bem}[Satz]{Remark}

\newcommand{\dg}{{\operatorname{div}_g}}
\newcommand{\dv}{\text{ }dV}
\allowdisplaybreaks[1]

\renewcommand{\epsilon}{\varepsilon}

\newcommand{\R}{\ensuremath{\mathbb{R}}}

\numberwithin{equation}{section}


\title{The Ricci flow with metric torsion on closed surfaces}
\author{Volker Branding}
\date{\today}
\address{TU Wien\\
Institut für diskrete Mathematik und Geometrie\\
Wiedner Hauptstraße 8–10, A-1040 Wien}
\email[]{volker@geometrie.tuwien.ac.at}
\begin{document}
\author{Klaus Kr\"oncke}
\address{Universität Hamburg\\
Fachbereich Mathematik
Bundesstraße 55,
20146 Hamburg}
\email[]{klaus.kroencke@uni-hamburg.de}
\subjclass[2010]{53C44, 53C05}
\keywords{Ricci flow; closed surface; metric torsion}
\begin{abstract}
The famous Uniformization Theorem states that on closed Riemannian surfaces there 
always exists a metric of constant curvature for the Levi-Cevita connection.
In this article we prove that an analogue of the uniformization theorem also holds for connections with metric torsion in the case of non-positive Euler characteristic.
Our main tool is an adapted form of the Ricci flow.
\end{abstract} 

\maketitle

\section{Introduction and results}
Throughout this article we consider a closed Riemannian surface \(M\) with a Riemannian metric \(g\). By \(\nabla^{\scriptscriptstyle LC}\) we denote
the Levi-Cevita connection, which we obtain from the metric \(g\).
For any affine connection there exists a \((2,1)\)-tensor field \(A\) such that
\begin{equation*}
\label{nabla-torsion}
\nabla^{\scriptscriptstyle Tor}_XY=\nabla^{\scriptscriptstyle LC}_XY+A(X,Y)
\end{equation*}
for all vector fields \(X,Y\).
We demand that the connection is \emph{metric}, that is for all vector fields \(X,Y,Z\) one has
\begin{equation*}
\label{nabla-metric}
X(g(Y,Z))=g(\nabla_XY,Z)+g(Y,\nabla_XZ).
\end{equation*}
Hence the endomorphism \(A\) has to skew-adjoint, that is
\begin{equation*}
\label{torsion-skewadjointess}
g(A(X,Y),Z)=-g(Y,A(X,Z)).
\end{equation*}
The skew-adjoint endomorphism \(A\) is called the torsion of the connection.
Since we are only considering a two-dimensional manifold, the torsion endomorphism takes
a rather simple form (see \cite[Theorem 3.1]{MR712664}), that is
\begin{align*}
A(X,Y)=g(X,Y)V-g(V,Y)X,
\end{align*}
where \(V\) is a vector field. For this reason one calls \(A(X,Y)\) \emph{vectorial torsion}.
We obtain the torsion tensor
\begin{align}
\label{torsion-t}T(X,Y)=&A(X,Y)-A(Y,X)=g(V,X)Y-g(V,Y)X.
\end{align}
For more details on metric connections with torsion see \cite{MR3493217} and references therein.
The torsion contributes to the scalar curvature by
\begin{equation}
R=R_g+2\dg V,
\end{equation}
where \(R_g\) denotes the curvature of the Levi-Cevita connection. For a proof, see \cite[Lemma 3.1]{MR3457391}.
For more details on the geometry of metric connections with vectorial torsion see \cite{MR3457391}.

The Gauss-Bonnet formula is not changed by the presence
of torsion since the torsion contributes to the scalar curvature via a divergence term, that is
\begin{align*}
4\pi\chi(M)=\int_MRd\mu=\int_M(R_g+2\dg V)d\mu.
\end{align*}
Let $(M,g)$ be a surface with torsion $A$ induced by the vector field $V$. 
Then we see that for the conformal metric $(M,e^{u}g)$, the vector field $e^{-u}V$ induces a connection with the same torsion $A$.

We are now interested in the following problem: Given a vectorial torsion \(A\) can we find a metric \(g\) in a given conformal class such that
\(R=\mathrm{const}\)? 
Our problem is related to the so-called \emph{prescribed curvature problem}:
Let \((M,h)\) be a closed Riemannian surface and let \(f\colon M\to\R\) be a prescribed function.
Does there exist a metric conformal to \(h\), that is \(g=e^{u}h\), whose scalar curvature is given by \(f\)?
Using the formula for the change of the scalar curvature under a conformal change of the metric this leads to 
the partial differential equation
\begin{equation*}
-\Delta_hu+R_h=fe^{u}.
\end{equation*}
On \(S^2\) this problem is very subtle and became known as \emph{Nierenbergs problem},
it can be attacked using variational techniques \cite{MR908146}.
However, the prescribed curvature problem can also be investigated by a flow approach, see
\cite{MR2137948} and \cite{MR2064965}.

In our case we want to prescribe the function \(r-2\dg V\) (where $r$ is defined below)
which yields the following partial differential equation
\begin{equation*}
-\Delta_hu+R_h=(r-2\dg V)e^{u}.
\end{equation*}
Note that the divergence of the vector field \(V\) depends on the metric \(g\),
hence the function we want to prescribe also depends on the metric.

To study the existence of constant curvature metrics on closed surfaces
for a metric connection with torsion we will make use of the following
evolution equation for the metric:

\begin{align}
\label{ricci-flow}
\frac{\partial}{\partial t}g(t)=(r-R_g-2\dg V_{g(t)})g(t),\qquad g(0)=g_0
\end{align}
with 
\begin{equation*}
r:=\frac{\int_MRd\mu}{\int_M d\mu}=\frac{4\pi\chi(M)}{\vol(M,g)}.
\end{equation*}
In order to ensure that the torsion does not change, the vector field $V_{g(t)}$ has to evolve by $\frac{d}{dt}V=(r-R)V$ or equivalently, the corresponding family of one-forms dual to $V$ via the metric $g(t)$ is constant in time.

A solution of this equation is called a \emph{normalized Ricci flow for a metric connection with (fixed) torsion}. Note that $r$ and the volume are constant along this flow.
In the case of vanishing torsion, it is known that the normalized Ricci flow on any surface converges to a metric of constant curvature \cite{MR954419}.
We will prove the following
\begin{Satz}\label{main-result}
Let \((M,g_0)\) be a closed Riemannian surface.
Then there exists a unique solution \(g(t)\) of \eqref{ricci-flow}
for all \(t\in [0,\infty)\).
If \(\chi(M)\leq 0\) the solution converges to a metric of constant curvature with torsion
as \(t\to\infty\).
\end{Satz}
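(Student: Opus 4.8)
The plan is to exploit that on a surface the flow \eqref{ricci-flow} is purely conformal, so that everything reduces to a single scalar parabolic equation. Writing $g(t)=e^{u(t)}g_0$ and, as explained above, keeping the torsion fixed by setting $V_{g(t)}=e^{-u}V_0$, the first step is to record how the torsion divergence transforms. Since in two dimensions $\sqrt{\det g}\,V^i=\sqrt{\det g_0}\,V_0^i$ is independent of $u$, one finds
\begin{equation*}
\dg V_{g(t)}=e^{-u}\di_{g_0}V_0,
\end{equation*}
so the torsion contribution to the full curvature is a \emph{fixed} function merely rescaled by the conformal factor. Combining this with the conformal law $R_g=e^{-u}(R_{g_0}-\Delta_{g_0}u)$ and abbreviating $\tilde R_0:=R_{g_0}+2\di_{g_0}V_0$, equation \eqref{ricci-flow} becomes the scalar quasilinear equation
\begin{equation*}
\frac{\partial}{\partial t}u=e^{-u}\bigl(\Delta_{g_0}u-\tilde R_0\bigr)+r,\qquad u(0)=0.
\end{equation*}
This is strictly parabolic because $e^{-u}>0$, so standard parabolic theory yields a unique smooth solution on a maximal interval $[0,T)$, settling existence and uniqueness locally in time.

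The key structural observation comes next: I would compute the evolution of the \emph{full} curvature $R=R_g+2\dg V$. Using $\partial_t R_g=-\Delta_g\dot u-\dot u\,R_g$ in two dimensions together with $\partial_t(2\dg V)=-\dot u\,(2\dg V)$, which holds precisely because $2\dg V=2e^{-u}\di_{g_0}V_0$ carries the same conformal weight as $R_g$, the torsion term is absorbed and, inserting $\dot u=r-R$,
\begin{equation*}
\frac{\partial}{\partial t}R=-\Delta_g\dot u-\dot u\,R=\Delta_g R+R(R-r).
\end{equation*}
This is exactly the evolution equation satisfied by the scalar curvature under the torsion-free normalized Ricci flow on surfaces \cite{MR954419}, now with $\tilde R_0$ in the role of the initial curvature; moreover $\int_M(R-r)\,d\mu=0$ for all $t$ by Gauss--Bonnet. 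Hence Hamilton's quantitative analysis of this reaction--diffusion equation carries over to $R$.

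To obtain long-time existence and convergence I would run the maximum principle on this equation. For $\chi(M)<0$ one has $r<0$; comparison with the ODE $\dot\rho=\rho(\rho-r)$ gives the lower bound $R\ge\rho(t)$ with $\rho(t)\to r$ exponentially, and together with the zero-mean constraint and a bound on the oscillation of $R$ this upgrades to a uniform two-sided bound and exponential decay $|R-r|\le Ce^{-\delta t}$. Feeding these curvature bounds back through parabolic regularity controls $u$ together with all its derivatives, so $T=\infty$ and $g(t)$ converges smoothly to a limit $g_\infty$; since $V_{g(t)}=e^{-u}V_0$ converges as well, the limit carries a connection with the same torsion and constant full curvature $R\equiv r$.

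The main obstacle is the borderline case $\chi(M)=0$, i.e.\ $r=0$. Here the comparison ODE degenerates to $\dot\rho=\rho^2$, which blows up in finite time, so the maximum principle alone yields neither an upper curvature bound nor even long-time existence. This case must be handled by the finer integral and entropy estimates of \cite{MR954419}: introduce the potential $f$ with $\Delta_g f=R-r$, solvable since $R-r$ has zero mean, exploit the monotonicity of the associated entropy together with a Harnack-type inequality to bound $|R|$, and thereby prove convergence to a flat metric. Once uniform curvature bounds are available, the bootstrapping argument and the identification of the limiting torsion proceed exactly as for $\chi(M)<0$.
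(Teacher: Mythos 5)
Your overall strategy --- conformal reduction to a single scalar parabolic equation, the evolution equation $\partial_t R=\Delta R+R(R-r)$, and the split into the cases $\chi(M)<0$ and $\chi(M)=0$ --- matches the paper's. But there are two genuine gaps. First, the claim that Hamilton's quantitative analysis ``carries over'' because the reaction--diffusion equation for $R$ is formally unchanged overlooks where the upper bound on $R$ actually comes from: the potential function $f$ with $\Delta f=R-r$. Its evolution $\partial_t f=\Delta f+rf$ is as in the torsion-free case, but the Ricci identity used to compute $\partial_t|\nabla f|^2$ involves the Levi-Civita curvature $R_g$ while $\Delta f$ involves $R=R_g+2\dg V$, so one picks up the extra term $2\,\dg V\,|\nabla f|^2$ in the evolution of $|\nabla f|^2$, and analogous torsion terms in every higher-derivative estimate. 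To close the maximum-principle argument one therefore needs an a priori bound on $\dg V$ along the flow; the paper obtains it by integrating $\partial_t(\dg V)=(R-r)\dg V=-(\Delta f)\,\dg V$ in time, yielding an expression in $f$ that is uniformly bounded for $r\le 0$ and finite for finite $t$ in general, and its induction for $|\nabla^kR|$ must simultaneously carry bounds on $|\nabla^k\dg V|$. This control of the torsion term is the main new technical content of the proof and is absent from your argument.

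Second, the long-time existence claim is for \emph{all} closed surfaces, including $\chi(M)>0$, which your proposal does not address at all; the paper derives it from a time-dependent (finite for finite $t$) bound $R\le C(t)$ valid for every sign of $r$, using the quantity $|\nabla f|^2+\beta(R-r)$ together with the time-dependent bound on $\dg V$. For $\chi(M)=0$ your diagnosis is also off: the comparison ODE only threatens the \emph{lower} bound (and in fact still gives $R>-1/t$), while the potential-function maximum principle does yield a uniform upper bound, since $\dg V$ is uniformly controlled when $r=0$; the genuine difficulty is upgrading boundedness to decay. Your proposed remedy --- Hamilton's surface entropy and Harnack estimate --- is the wrong tool here: those are devices for the $r>0$ case and, as the paper explicitly notes, they break down in the presence of torsion anyway. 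The paper instead proves exponential decay of $\int_MR^2\,d\mu$, $\int_M|\nabla R|^2\,d\mu$ and $\int_M(\Delta R)^2\,d\mu$ from integral identities combined with the spectral-gap inequality $\int_M(\Delta f)^2\,d\mu\ge c_1\int_M|\nabla f|^2\,d\mu$, and concludes via the Sobolev embedding theorem.
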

\begin{Cor}
Let \((M,g)\) be a closed Riemannian surface with  \(\chi(M)\leq 0\) equipped with a vectorial torsion $A$. Then in the conformal class of $g$, there exists a (up to rescaling) unique metric $\bar{g}$ whose scalar curvature with respect to the torsion $A$ is constant.
\end{Cor}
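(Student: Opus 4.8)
The plan is to read off existence directly from the flow and to establish uniqueness by an elliptic comparison argument. For existence, I would run the normalized Ricci flow \eqref{ricci-flow} with the given torsion $A$, starting from $g_0=g$. Since the right-hand side of \eqref{ricci-flow} is a pointwise multiple of $g(t)$, the flow never leaves the conformal class of $g$, and by construction the one-form dual to $V_{g(t)}$ is constant in time, so the torsion stays equal to $A$. Theorem \ref{main-result} then guarantees a solution for all $t\in[0,\infty)$ which, because $\chi(M)\leq 0$, converges to a metric $\bar g$ of constant curvature with torsion. Being a limit of metrics conformal to $g$ and carrying torsion $A$, the metric $\bar g$ is itself conformal to $g$ and carries torsion $A$; by Gauss--Bonnet its constant curvature value equals $r=4\pi\chi(M)/\vol(M,\bar g)$. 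This already yields the existence part of the statement.

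For uniqueness I would translate the constant-curvature condition into a scalar equation. Writing a competitor as $\tilde g=e^{u}g$ and recalling that $e^{-u}V$ induces the same torsion $A$ with respect to $\tilde g$, the two-dimensional identity $\di_{\tilde g}(e^{-u}V)\,dV_{\tilde g}=\di_g(V)\,dV_g$ gives $\di_{\tilde g}(e^{-u}V)=e^{-u}\di_g V$. Combining this with the conformal transformation law for the scalar curvature recalled in the introduction, the requirement $R_{\tilde g}=\mathrm{const}$ becomes
\begin{equation*}
-\Delta_g u+\bigl(R_g+2\di_g V\bigr)=r\,e^{u},
\end{equation*}
which is exactly the prescribed-curvature equation from the introduction and whose constant $r$ is $\leq 0$ because $\chi(M)\leq 0$.

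Uniqueness then splits into two cases. If $\chi(M)<0$, so that $r<0$, I would first rescale one of two given constant-curvature metrics so that both have the same volume and hence solve the displayed equation with the same constant $r$; at a maximum of the difference $w=u_1-u_2$ one has $0\leq-\Delta_g w=r\,(e^{u_1}-e^{u_2})$, forcing $e^{u_1}\leq e^{u_2}$ there since $r<0$, so $w\leq 0$ everywhere, and the symmetric argument yields $w\equiv 0$; undoing the rescaling shows the two metrics differ by a constant factor. If $\chi(M)=0$, so that $r=0$, the equation degenerates to the linear Poisson equation $\Delta_g u=R_g+2\di_g V$, which is solvable since its right-hand side integrates to $4\pi\chi(M)=0$ and whose solution is unique up to an additive constant---precisely the freedom of rescaling $\bar g$ by a positive constant.

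The only genuinely delicate point is the bookkeeping of normalizations: rescaling $\tilde g\mapsto c\,\tilde g$ keeps the curvature constant but changes its value $r$, so uniqueness can hold only up to such a rescaling, and the comparison argument must therefore be run after matching volumes. Once this is accounted for, the existence half is immediate from Theorem \ref{main-result} and the uniqueness half reduces to the standard maximum-principle comparison for the semilinear equation above, so I expect no substantial further obstacle.
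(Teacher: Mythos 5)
Your proposal is correct and takes essentially the same route as the paper: existence is read off from Theorem \ref{main-result} by running the flow \eqref{ricci-flow} inside the conformal class, and uniqueness follows from the maximum principle applied to the semilinear equation for the conformal factor, exactly as in the remark opening Section 3 of the paper (which normalizes one of the two metrics to be the background and derives $\Delta u=r(1-e^{u})$, the same comparison you perform for the difference $u_1-u_2$). Your handling of the volume normalization and of the degenerate linear case $r=0$ matches the paper's, so there is nothing to add.
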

As for the standard Ricci flow the case of positive Euler characteristic is the most difficult one.

We want to point out that the evolution equation \eqref{ricci-flow} is also connected to the so-called 
renormalization group flow studied in quantum field theory. This flow arises in the perturbative
quantization of the so-called \emph{non-linear sigma model}, the Ricci flow being the first order truncation.
Our flow would describe the first order approximation of the renormalization group flow for a 
non-linear sigma model, where the target is two-dimensional and has a metric connection with torsion.
A similar flow in higher dimensions has been studied in \cite{MR2426247}.

\section{Ricci flow with metric torsion}
In this section we discuss the basic properties of \eqref{ricci-flow}.
Moreover, we establish the longtime-existence of our flow.
Rewriting \eqref{ricci-flow} as an evolution equation for the conformal factor, that is \(g(t)=e^{u(t)}g_0\), yields
\begin{equation}
\label{evolution-conformal-factor}
\frac{\partial u}{\partial t}=\Delta_{g(t)}u-e^{-u(t)}(R_{g_0}+2\mathrm{div}_{g_0}V)+r. 
\end{equation}
Using standard parabolic theory we obtain a unique smooth solution of \eqref{evolution-conformal-factor} 
for \(t\in [0,T)\) satisfying \(u_0=0\).

\begin{Dfn}
We call a metric \(g\) \emph{self-similar solution} of \eqref{ricci-flow} if
there exists a vector field \(X\) such that
\begin{align}
\label{self-similar-solution}
(-R_{g}+\int_M R_{g}dV_{g}-2\mathrm{div}_{g}V)g=L_Xg,\qquad L_XV^{\flat}=0
\end{align}
holds. Here, $V^{\flat}$ is the one-form dual to $V$.
\end{Dfn}
\noindent By the standard variational formulas we obtain
\begin{Lem}\label{variational-formulas}
Let \(g(t)\) be a solution of \eqref{ricci-flow}. Then the following formulas hold
\begin{align*}
\frac{d}{dt}\Delta=(R-r)\Delta, \qquad
\frac{d}{dt}\mathrm{div}=(R-r)\mathrm{div}, \qquad
\frac{d}{dt}d\mu=(R-r)d\mu,
\end{align*}
where the divergence here is the divergence applied to one-forms and \(d\mu\) denotes the volume element.
\end{Lem}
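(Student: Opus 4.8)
The key structural observation is that the flow \eqref{ricci-flow} is \emph{purely conformal}: since $R=R_g+2\dg V$, equation \eqref{ricci-flow} is simply $\frac{\partial}{\partial t}g=(r-R)g$, and writing $g(t)=e^{u(t)}g_0$ as in \eqref{evolution-conformal-factor} this amounts to $\frac{\partial u}{\partial t}=r-R$. Each of the three objects in the statement---the Laplacian acting on functions, the divergence acting on one-forms, and the volume element---is built from the metric and, on a surface, depends on $g(t)$ only through a single explicit power of the conformal factor $e^{u}$. The plan is therefore to record this power for each object and then differentiate in $t$, substituting $\frac{\partial u}{\partial t}=r-R$.

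Next I would compute the three conformal weights in dimension two. From $\det(e^{u}g_0)=e^{2u}\det g_0$ one obtains $d\mu=e^{u}\,d\mu_{g_0}$. From the local expression $\di_g\omega=|g|^{-1/2}\partial_i(|g|^{1/2}g^{ij}\omega_j)$ together with the two-dimensional identity $|g|^{1/2}g^{ij}=e^{u}|g_0|^{1/2}\cdot e^{-u}g_0^{ij}=|g_0|^{1/2}g_0^{ij}$ one obtains $\di_g=e^{-u}\di_{g_0}$ on one-forms; and since $\Delta_g f=\di_g(df)$ with $df$ independent of the metric, the Laplacian inherits the same weight, $\Delta_g=e^{-u}\Delta_{g_0}$. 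These are exactly the standard variational formulas alluded to before the statement. Alternatively they follow from the general first-variation identities $\partial_t\,d\mu=\tfrac12\Tr_g(\partial_t g)\,d\mu$ and $\partial_t(\Delta f)=-\langle\partial_t g,\nabla^2 f\rangle-\langle\di(\partial_t g)-\tfrac12\nabla\Tr_g(\partial_t g),\nabla f\rangle$, where for the conformal variation $\partial_t g=(r-R)g$ the last two gradient terms cancel \emph{precisely} because $n=2$, leaving $\partial_t(\Delta f)=-(r-R)\Delta f$.

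Finally I would differentiate in $t$. Using $\frac{\partial u}{\partial t}=r-R$ this gives $\frac{d}{dt}\Delta=-\tfrac{\partial u}{\partial t}\,e^{-u}\Delta_{g_0}=(R-r)\Delta$ and likewise $\frac{d}{dt}\di=(R-r)\di$ on one-forms, the two differential operators carrying conformal weight $-1$; the volume element carries the opposite weight $+1$, so $\frac{d}{dt}\,d\mu=\tfrac{\partial u}{\partial t}\,e^{u}d\mu_{g_0}=(r-R)\,d\mu$, which integrates to $\frac{d}{dt}\vol=0$ and confirms that the volume is preserved. I do not expect a genuine obstacle here, as the computation is routine and local; the only points requiring care are that the divergence is taken on one-forms rather than vector fields (this fixes its conformal weight, and hence its sign of evolution, to agree with the Laplacian) and the consistent bookkeeping of signs through the substitution $\frac{\partial u}{\partial t}=r-R$.
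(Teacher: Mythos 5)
Your proof is correct, and it is exactly the computation the paper leaves implicit: the lemma is stated with only the remark that it follows ``by the standard variational formulas'', and the conformal-weight argument you give (in dimension two, $d\mu=e^{u}d\mu_{g_0}$, $\Delta_{g}=e^{-u}\Delta_{g_0}$ and $\mathrm{div}_{g}=e^{-u}\mathrm{div}_{g_0}$ on one-forms, combined with $\partial_t u=r-R$) is that standard route. One substantive point: your computation yields $\frac{d}{dt}d\mu=(r-R)\,d\mu$, whereas the lemma as printed asserts $\frac{d}{dt}d\mu=(R-r)\,d\mu$. Your sign is the correct one, since $\partial_t d\mu=\tfrac12\Tr_g(\partial_t g)\,d\mu=(r-R)\,d\mu$ for $\partial_t g=(r-R)g$, and it is also the sign the paper actually uses downstream: in Lemma \ref{evolution-integrals-torus} the term $\int_M(-R)|\nabla f|^2\,d\mu$ in the computation of $\frac{d}{dt}\int_M|\nabla f|^2\,d\mu$ comes precisely from $\partial_t d\mu=-R\,d\mu$ (there $r=0$). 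So you have not made an error; the third displayed formula in the statement contains a sign typo. Be aware, though, that your consistency check via volume preservation does not by itself settle the sign, since $\int_M(R-r)\,d\mu=0$ holds just as well as $\int_M(r-R)\,d\mu=0$.
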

\noindent For the Levi-Cevita connection we have the so-called \emph{Ricci-identity}, which states
\begin{equation}
\label{ricci-identity-lc}
\nabla^{\scriptscriptstyle LC}\Delta f=\Delta\nabla^{\scriptscriptstyle LC} f-\frac{1}{2}R_g\nabla^{\scriptscriptstyle LC} f.
\end{equation}
For a metric connection with torsion we have a more complicated identity:
\begin{Lem}
The Ricci identity for a connection with torsion reads
\begin{align*}
\nabla^{\scriptscriptstyle Tor}_X\Delta f=\Delta^{\scriptscriptstyle Tor}\nabla_X f-\frac{1}{2}R\nabla_X f
+\nabla^{\scriptscriptstyle Tor,2}_{X,V} f+\nabla^{\scriptscriptstyle Tor,2}_{V,X} f-2\Delta f\cdot g(V,X),
\end{align*}
where \(\nabla^{\scriptscriptstyle Tor,2}\) denotes the second covariant derivative involving torsion. Note that the Laplacian on functions does not change in the presence of torsion.
\end{Lem}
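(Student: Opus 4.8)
The plan is to reduce the torsion identity to the Levi-Cevita Ricci identity \eqref{ricci-identity-lc}, the algebraic relation $\nabla^{\scriptscriptstyle Tor}=\nabla^{\scriptscriptstyle LC}+A$ with the vectorial torsion $A(X,Y)=g(X,Y)V-g(V,Y)X$, and the curvature formula $R=R_g+2\dg V$. Since $\Delta f$ and $\nabla_Xf=X(f)$ are scalar functions, on which all metric connections agree, the left-hand side satisfies $\nabla^{\scriptscriptstyle Tor}_X\Delta f=\nabla^{\scriptscriptstyle LC}_X\Delta f$; applying \eqref{ricci-identity-lc} rewrites it as $(\Delta^{\scriptscriptstyle LC}df)(X)-\frac{1}{2}R_g\nabla_Xf$, where $\Delta^{\scriptscriptstyle LC}$ is the Levi-Cevita connection Laplacian on the one-form $df$. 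Using $R=R_g+2\dg V$ to turn $R_g$ into $R$, the claimed identity is therefore equivalent to the purely differential-geometric relation
\begin{align*}
(\Delta^{\scriptscriptstyle LC}df)(X)-(\Delta^{\scriptscriptstyle Tor}df)(X)=\nabla^{\scriptscriptstyle Tor,2}_{X,V}f+\nabla^{\scriptscriptstyle Tor,2}_{V,X}f-\dg V\,\nabla_Xf-2\Delta f\cdot g(V,X),
\end{align*}
so everything comes down to comparing the two connection Laplacians of $df$.

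First I would record the building block obtained by inserting the vectorial $A$ into $\nabla^{\scriptscriptstyle Tor}=\nabla^{\scriptscriptstyle LC}+A$. For a one-form $\omega$ this gives $(\nabla^{\scriptscriptstyle Tor}_X\omega)(Y)=(\nabla^{\scriptscriptstyle LC}_X\omega)(Y)-g(X,Y)\omega(V)+g(V,Y)\omega(X)$, and specializing to $\omega=df$ yields the relation between the two Hessians
\begin{align*}
\nabla^{\scriptscriptstyle Tor,2}_{X,Y}f=\nabla^{\scriptscriptstyle LC,2}_{X,Y}f-g(X,Y)\,V(f)+g(V,Y)\,X(f).
\end{align*}
In particular the torsion Hessian is no longer symmetric, which is why both orderings $\nabla^{\scriptscriptstyle Tor,2}_{X,V}f$ and $\nabla^{\scriptscriptstyle Tor,2}_{V,X}f$ occur in the statement, and a computation of $\Delta^{\scriptscriptstyle Tor}f-\Delta^{\scriptscriptstyle LC}f$ from this relation recovers the assertion that the scalar Laplacian is the one we keep fixed.

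Next I would compute $\Delta^{\scriptscriptstyle Tor}df=\operatorname{tr}_g\nabla^{\scriptscriptstyle Tor,2}(df)$ by tracing the second torsion covariant derivative in a local orthonormal frame that is Levi-Cevita normal at the base point, so that $\nabla^{\scriptscriptstyle Tor}_{e_i}e_j=A(e_i,e_j)$ there. Substituting the two displayed relations and carrying out the trace $\sum_i$ produces, besides $\Delta^{\scriptscriptstyle LC}df$, three types of correction terms: genuinely second-order terms that reassemble, via the Hessian relation above, into $\nabla^{\scriptscriptstyle Tor,2}_{X,V}f+\nabla^{\scriptscriptstyle Tor,2}_{V,X}f$; a first-order term in which $\nabla^{\scriptscriptstyle LC}$ differentiates the vector field $V$ sitting inside $A$, whose trace $\sum_i g(\nabla^{\scriptscriptstyle LC}_{e_i}V,e_i)=\dg V$ supplies the divergence contribution; and purely algebraic terms coming from the trace factor $g(e_i,e_i)=2$ and from the quadratic contribution $A(e_i,A(e_i,\cdot))$, which account for $-2\Delta f\cdot g(V,X)$. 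Collecting these should reproduce the reduced identity, and feeding it back yields the statement.

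The hard part is the trace bookkeeping in the last step. One must first fix the precise meaning of $\Delta^{\scriptscriptstyle Tor}$ on one-forms and the sign conventions for the connection Laplacian so that they are consistent with \eqref{ricci-identity-lc}, then track every correction term through the trace, respecting the non-symmetry of the torsion Hessian so that the two orderings enter with the correct coefficients and the factor $-2$ together with the $\dg V$ term come out exactly. A useful cross-check is to derive the same result from the general commutation rule $\nabla^2_{X,Y}-\nabla^2_{Y,X}=R^{\scriptscriptstyle Tor}(X,Y)-\nabla^{\scriptscriptstyle Tor}_{T(X,Y)}$ for a connection with torsion, using the explicit torsion tensor $T(X,Y)=g(V,X)Y-g(V,Y)X$ from \eqref{torsion-t} and the known relation between the curvatures of $\nabla^{\scriptscriptstyle Tor}$ and $\nabla^{\scriptscriptstyle LC}$; matching the two derivations pins down the coefficients.
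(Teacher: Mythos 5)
Your overall strategy---reduce to the Levi-Cevita Ricci identity \eqref{ricci-identity-lc} and then compute the difference $(\Delta^{\scriptscriptstyle LC}df)(X)-(\Delta^{\scriptscriptstyle Tor}df)(X)$ by brute force from $\nabla^{\scriptscriptstyle Tor}=\nabla^{\scriptscriptstyle LC}+A$---is genuinely different from the paper's. The paper never leaves the torsion connection: it writes $\nabla^{\scriptscriptstyle Tor}_X\Delta f$ as a trace of third covariant derivatives, commutes slots using $\nabla^{\scriptscriptstyle Tor,2}_{X,Y}-\nabla^{\scriptscriptstyle Tor,2}_{Y,X}=R^{\scriptscriptstyle Tor}_{X,Y}-\nabla^{\scriptscriptstyle Tor}_{T(X,Y)}$, and then feeds in two structural facts about $R^{\scriptscriptstyle Tor}$ imported from the cited reference: antisymmetry in the last two entries and $\Ric^{\scriptscriptstyle Tor}=\tfrac{1}{2}R\cdot\mathrm{id}$. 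The term $-\tfrac{1}{2}R\nabla_Xf$, and in particular the $\dg V$ hidden inside $R$, enters the paper's proof entirely through this Ricci contraction. (Your ``cross-check'' at the end is essentially the paper's actual proof.) Your reduction itself is set up correctly: substituting \eqref{ricci-identity-lc} and $R=R_g+2\dg V$ into the claimed identity does produce exactly the displayed relation between the two connection Laplacians of $df$.

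The gap is that the entire content of the lemma now sits in the trace bookkeeping, which you do not carry out but only describe---and the description contains a claim that will not survive the computation. You assert that the first-order correction arises as the full trace $\sum_i g(\nabla^{\scriptscriptstyle LC}_{e_i}V,e_i)=\dg V$. But in the expansion of $\sum_i\nabla^{\scriptscriptstyle Tor,2}_{e_i,e_i}(df)$ the outer $\nabla^{\scriptscriptstyle LC}_{e_i}$ hits the $V$ inside $A^*_{e_i}df=V(f)\,e_i^{\flat}-e_i(f)\,V^{\flat}$, and after evaluating at $X$ and summing, the frame index gets contracted against $X$ and against $\grad f$, not against itself: one obtains $df(\nabla^{\scriptscriptstyle LC}_XV)$ and $(\nabla^{\scriptscriptstyle LC}_{\grad f}V^{\flat})(X)$, whose combination is of the type $dV^{\flat}(\grad f,X)$, i.e.\ a curl-type quantity independent of $\dg V$. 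No term of the form $\dg V\cdot\nabla_Xf$ appears from this route by itself; in the paper that term is supplied by the identity $\Ric^{\scriptscriptstyle Tor}=\tfrac{1}{2}R$, a structural input your approach does not invoke. So either you must import the curvature computation for $R^{\scriptscriptstyle Tor}$ anyway (at which point you are doing the paper's proof), or you must carry the full expansion to the end and show explicitly how the leftover $\nabla V$ and Hessian terms reorganize---as written, ``collecting these should reproduce the reduced identity'' is precisely the step that is in doubt, and the advertised bookkeeping does not close.
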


\begin{proof}
We do the computation with respect to an orthonormal frame. Then
\begin{align*}
\nabla^{\scriptscriptstyle Tor}_{X}\Delta f=\sum_j \nabla^{\scriptscriptstyle Tor,3}_{X,e_j,e_j}f=\sum_j\nabla^{\scriptscriptstyle Tor,3}_{e_j,X,e_j}f
+\sum_j R^{\scriptscriptstyle Tor}_{X,e_j}\nabla_{e_j} f-\sum \nabla^{\scriptscriptstyle Tor,2}_{T(X,e_j),e_j}f.
\end{align*}
Note that the curvature endomorphism is given by
$R^{\scriptscriptstyle Tor}_{X,Y}=[\nabla^{\scriptscriptstyle Tor}_X,\nabla^{\scriptscriptstyle Tor}_Y]-\nabla^{\scriptscriptstyle Tor}_{[X,Y]}$ 
such that $\nabla^{\scriptscriptstyle Tor,2}_{X,Y}-\nabla^{\scriptscriptstyle Tor,2}_{Y,X}=R^{\scriptscriptstyle Tor}_{X,Y}-\nabla^{\scriptscriptstyle Tor}_{T(X,Y)}$. 
Therefore, we get the torsion term above.

To proceed, we remark that the Hessian of a function is not symmetric since it satisfies
\begin{align*}
\nabla^{\scriptscriptstyle Tor,2}_{X,Y}f-\nabla^{\scriptscriptstyle Tor,2}_{Y,X}f=-\nabla^{\scriptscriptstyle Tor}_{T(X,Y)}f=\nabla^{\scriptscriptstyle Tor}_{g(V,Y)X-g(V,X)Y}f
\end{align*}
and thus,
\begin{align*}
\sum_j\nabla^{\scriptscriptstyle Tor,3}_{e_j,X,e_j}f=\Delta^{\scriptscriptstyle Tor}\nabla^{\scriptscriptstyle Tor}_{X}f+\nabla^{\scriptscriptstyle Tor,2}_{V,X}f-g(V,X)\Delta f.
\end{align*}
To consider the other term, we first remark that the Riemann tensor of the connection is antisymmetric in the last two entries
\begin{align*}
\langle R^{\scriptscriptstyle Tor}_{X,Y}Z,W\rangle=-\langle R^{\scriptscriptstyle Tor}_{X,Y}W,Z\rangle,
\end{align*}
which follows from \cite[Lemma 3.1]{MR3457391}. Because the connection is metric, we have $(\nabla^{\scriptscriptstyle Tor}_X\omega)^{\sharp}=\nabla^{\scriptscriptstyle Tor}_X(\omega^{\sharp})$ 
for any one-form $\omega$ which implies $(R^{\scriptscriptstyle Tor}_{X,Y}\omega)^{\sharp}=R^{\scriptscriptstyle Tor}_{X,Y}(\omega^{\sharp})$ and thus
\begin{align*}
(R^{\scriptscriptstyle Tor}_{X,Y}\omega)(Z)=-\omega(R^{\scriptscriptstyle Tor}_{X,Y}Z).
\end{align*}
Therefore, we have
\begin{align*}
\sum_j R^{\scriptscriptstyle Tor}_{X,e_j}\nabla_{e_j} f=-\nabla_{Ric^{\scriptscriptstyle Tor}(X)}f=-\frac{1}{2}R\nabla_{X}f,
\end{align*}
which completes the proof.
\end{proof}
\begin{Bem}Because of this complicated formula, it turned out that it is more convenient to use the Levi-Cevita connection for higher derivative estimates. 
For the rest of the paper, all covariant derivatives are therefore taken with respect to the Levi-Cevita connection.
\end{Bem}
\begin{Lem}\label{lemma-evolution-R}
Let \((M,g(t))\) be a solution of the normalized Ricci flow with torsion on a closed surface.
Then the scalar curvature $R$ of the connection evolves by
\begin{align}
\label{evolution-R}
\frac{d}{dt}R=\Delta R+R(R-r).
\end{align}
\end{Lem}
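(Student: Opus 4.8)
The plan is to exploit that \eqref{ricci-flow} is a purely conformal deformation and to split the curvature as $R=R_g+2\dg V$, treating the Levi-Cevita part and the torsion part separately. Writing $g(t)=e^{u(t)}g_0$ and setting $\psi:=r-R$, the flow equation says exactly $\partial_t u=\psi$, so the whole computation reduces to understanding how $R_g$ and $\dg V$ respond to this single scalar deformation. First I would dispose of the torsion term. The structural fact recorded in the introduction is that the one-form $V^{\flat}$ dual to $V$ is held \emph{constant in time} precisely so that the torsion stays fixed; since $\dg V=\operatorname{div}_{g(t)}(V^{\flat})$ with $V^{\flat}$ time-independent and $\operatorname{div}$ the divergence on one-forms, Lemma \ref{variational-formulas} applies with a frozen argument and yields at once
\[
\frac{d}{dt}\big(2\dg V\big)=(R-r)\,2\dg V,
\]
with no contribution from differentiating $V^{\flat}$ itself.

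Next I would compute the evolution of the Levi-Cevita scalar curvature. Using the two-dimensional conformal identities $R_g=e^{-u}(R_{g_0}-\Delta_{g_0}u)$ and $\Delta_{g(t)}=e^{-u}\Delta_{g_0}$, differentiating in $t$ and inserting $\partial_t u=\psi=r-R$ gives
\[
\frac{d}{dt}R_g=-\psi R_g-\Delta_{g(t)}\psi=(R-r)R_g+\Delta R,
\]
where the crucial point is that $-\Delta\psi=-\Delta(r-R)=\Delta R$ produces the Laplacian of the \emph{full} curvature $R$ rather than of $R_g$ alone, because $r$ is constant.

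Finally, adding the two contributions and invoking $R=R_g+2\dg V$ collapses the two $(R-r)$-terms into the single term $(R-r)R$, leaving $\frac{d}{dt}R=\Delta R+R(R-r)$, which is the assertion. I expect the main subtlety to be the bookkeeping around the torsion: one must use that $V^{\flat}$ (and not $V$) is the time-independent object, so that the variational formula for the divergence operator applies without extra terms and the torsion contribution assembles with the Levi-Cevita Laplacian term into $\Delta R$. As a consistency check I would also derive $\frac{d}{dt}(2\dg V)$ directly from $2\dg V=e^{-u}\cdot 2\operatorname{div}_{g_0}V_{g_0}$, differentiating $e^{-u}$ with $\partial_t u=r-R$; this reproduces the same factor $(R-r)$ and confirms the sign conventions inherited from \eqref{evolution-conformal-factor}.
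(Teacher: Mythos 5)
Your proposal is correct and follows essentially the same route as the paper: vary $R_g$ under the conformal deformation $h=r-R$ (your explicit computation via $R_g=e^{-u}(R_{g_0}-\Delta_{g_0}u)$ is just the standard first-variation formula $\frac{d}{dt}R_g=-\Delta h-R_gh$ in disguise), use that $V^{\flat}$ is time-independent so $\dg V$ scales by $(R-r)\dg V$, and add. Your sign $(R-r)$ for the torsion term is the one consistent with Lemma \ref{variational-formulas} and needed for the final identity; the paper's proof writes $(r-R)$ at that step, which is a typo.
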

\begin{proof}
By the first variation of the Levi-Cevita scalar curvature $R_g$ we have,
\begin{align*}
\frac{d}{dt}R_g=-\Delta h-R_g h,
\end{align*}
where we assume $\frac{d}{dt}{g}=h\cdot g$. Moreover, because the family of one-forms corresponding to $V_{g(t)}$ via $g(t)$ is constant and therefore,
\begin{align*}
\frac{d}{dt}(\mathrm{div}V)=(r-R)\mathrm{div}V
\end{align*}
by Lemma \ref{variational-formulas}.
Inserting $h=-R+r$ and using $R=R_g+2\mathrm{div}V^{\flat}$ yields the formula.
\end{proof}

By the standard Ricci identity \eqref{ricci-identity-lc} and the evolution for the scalar curvature we get the following
\begin{Lem}
Let \((M,g(t))\) be a solution of the normalized Ricci flow with torsion on a closed surface.
Then \(|\nabla R|^2\) evolves as
\begin{align}
\label{evolution-nabla-R}
\frac{d}{dt}|\nabla R|^2=\Delta |\nabla R|^2-2|\nabla^2 R|^2+(4R-3r+2\mathrm{div}V)|\nabla R|^2.
\end{align}
\end{Lem}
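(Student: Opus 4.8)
The plan is to differentiate the Levi-Cevita quantity $|\nabla R|^2=g^{ij}\nabla_iR\,\nabla_jR$ in time, separating the two sources of $t$-dependence: the inverse metric and the scalar curvature. From the flow \eqref{ricci-flow} we have $\partial_tg_{ij}=(r-R)g_{ij}$, and differentiating $g^{ik}g_{kj}=\delta^i_j$ gives $\partial_tg^{ij}=(R-r)g^{ij}$. This first piece contributes a term $(R-r)|\nabla R|^2$ to the time derivative.

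For the second piece I would write $2g^{ij}\nabla_i(\partial_tR)\,\nabla_jR=2\langle\nabla(\partial_tR),\nabla R\rangle$ and insert the evolution equation \eqref{evolution-R}, namely $\partial_tR=\Delta R+R(R-r)$. Taking the gradient, the zeroth-order part is immediate, $\nabla(R(R-r))=(2R-r)\nabla R$, so it produces $2(2R-r)|\nabla R|^2$. The genuinely nontrivial term is $2\langle\nabla\Delta R,\nabla R\rangle$, in which gradient and Laplacian must be commuted.

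Here I would apply the standard Ricci identity \eqref{ricci-identity-lc}, $\nabla\Delta R=\Delta\nabla R-\tfrac12R_g\nabla R$, to obtain $2\langle\nabla\Delta R,\nabla R\rangle=2\langle\Delta\nabla R,\nabla R\rangle-R_g|\nabla R|^2$, and then the product-rule identity $\Delta|\nabla R|^2=2\langle\Delta\nabla R,\nabla R\rangle+2|\nabla^2R|^2$ to eliminate the remaining inner product. Altogether this yields $2\langle\nabla\Delta R,\nabla R\rangle=\Delta|\nabla R|^2-2|\nabla^2R|^2-R_g|\nabla R|^2$, which already accounts for the Laplacian and the Hessian terms of the claimed formula.

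Collecting everything, the coefficient of $|\nabla R|^2$ is $(R-r)+2(2R-r)-R_g=5R-3r-R_g$. The final and torsion-specific step, which I regard as the main point to watch, is that the curvature entering through the Levi-Cevita Ricci identity is $R_g$ and not the full scalar curvature $R$ of the connection with torsion. Substituting $R_g=R-2\mathrm{div}V$ converts $5R-3r-R_g$ into $4R-3r+2\mathrm{div}V$, which is exactly \eqref{evolution-nabla-R}. In effect the torsion contributes only through this substitution, which is why it surfaces as the single extra term $2\mathrm{div}V$ in the coefficient; the rest of the computation is identical to the torsion-free case.
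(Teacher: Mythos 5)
Your proof is correct and follows exactly the route the paper intends: the paper gives no written proof of this lemma beyond the remark that it follows from the Ricci identity \eqref{ricci-identity-lc} and the evolution equation \eqref{evolution-R}, and its explicit computation of $\frac{d}{dt}|\nabla f|^2$ in Lemma \ref{evolution-curvature-potenials} uses precisely your scheme (metric-variation term, Ricci identity, Bochner product rule, then the substitution $R_g=R-2\mathrm{div}V$). The bookkeeping $(R-r)+2(2R-r)-R_g=4R-3r+2\mathrm{div}V$ checks out.
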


\begin{Dfn}
We define the curvature potential \(f\) by
\begin{align}
\label{curvature-potential}
\Delta f=R-r.
\end{align}
\end{Dfn}
The right hand side of \eqref{curvature-potential} has vanishing integral, so there always exists a solution to this equation.

\begin{Lem}
Let \(f_0(x,t)\) be a potential of the curvature for a solution \((M,g(t))\)
of the normalized Ricci flow with torsion on a closed Riemannian surface.
Then there exists a function \(c(t)\) depending only on \(t\) such that
the potential function \(f=f_0+c\) satisfies
\begin{align}
\label{evolution-f}
\frac{\partial f}{\partial t}=\Delta f+rf.
\end{align}
\end{Lem}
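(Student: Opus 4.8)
The plan is to differentiate the defining relation $\Delta f_0=R-r$ in time and compare it with the evolution equation \eqref{evolution-R} for the scalar curvature. The crucial subtlety is that the Laplacian is itself time-dependent, so by Lemma \ref{variational-formulas} one has the operator identity $\frac{d}{dt}\Delta=(R-r)\Delta$. Applying this to $f_0$ gives
\[
\frac{\partial}{\partial t}(\Delta f_0)=(R-r)\Delta f_0+\Delta\frac{\partial f_0}{\partial t}.
\]
On the other hand, since $r$ is constant in time, differentiating $\Delta f_0=R-r$ directly and inserting \eqref{evolution-R} yields $\frac{\partial}{\partial t}(\Delta f_0)=\Delta R+R(R-r)$.

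First I would equate the two expressions. Using $\Delta f_0=R-r$ to replace $(R-r)\Delta f_0$ by $(R-r)^2$ and the elementary identity $R(R-r)-(R-r)^2=r(R-r)=r\,\Delta f_0$, this collapses to
\[
\Delta\frac{\partial f_0}{\partial t}=\Delta R+r\,\Delta f_0.
\]
Next, because $r$ is spatially constant one has $\Delta R=\Delta(\Delta f_0)$, so the entire right-hand side is itself a Laplacian, and the identity becomes
\[
\Delta\Bigl(\frac{\partial f_0}{\partial t}-\Delta f_0-r f_0\Bigr)=0.
\]

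Since $M$ is closed, every harmonic function is constant, so the bracketed quantity equals some function $c_0(t)$ depending only on $t$. The final step is to absorb this defect into the freedom inherent in the potential: replacing $f_0$ by $f=f_0+c(t)$ preserves the potential property \eqref{curvature-potential}, while it shifts the defect by $c'(t)-r\,c(t)$. Choosing $c$ to solve the linear first-order ODE $c'(t)-r\,c(t)=-c_0(t)$, which is always solvable, makes the defect vanish and produces exactly \eqref{evolution-f}.

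I expect the only genuinely delicate point to be the correct bookkeeping of the time-dependence of $\Delta$ in the very first step, since overlooking the term $(R-r)\Delta f_0$ would spoil the cancellation. Once that operator identity is in hand, the remainder is algebraic simplification together with the standard fact that a closed manifold admits no non-constant harmonic functions, and the ODE adjustment is automatic because it merely exploits the constant of integration left undetermined by the equation $\Delta f=R-r$.
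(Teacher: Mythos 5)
Your proof is correct and follows essentially the same route as the paper: differentiate \(\Delta f_0=R-r\) in time using \(\frac{d}{dt}\Delta=(R-r)\Delta\), compare with \eqref{evolution-R} to conclude \(\Delta\bigl(\frac{\partial f_0}{\partial t}-\Delta f_0-rf_0\bigr)=0\), invoke that harmonic functions on a closed surface are constant, and absorb the resulting function of \(t\) by solving the linear ODE \(c'-rc=-c_0\). The paper's explicit integrating-factor formula for \(c(t)\) is just the closed-form solution of that same ODE (with an apparent sign typo in its exponent), so there is no substantive difference.
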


\begin{proof}
By differentiating \eqref{curvature-potential} with respect to \(t\) and Lemma \ref{lemma-evolution-R} we obtain
\begin{align*}
\frac{\partial}{\partial t}\Delta f&=(R-r)^2+\Delta\frac{\partial f}{\partial t}=\frac{\partial R}{\partial t}
=\Delta\Delta f+R(R-r),
\end{align*}
which gives
\[
\Delta(\frac{\partial f}{\partial t}-\Delta f-rf)=0.
\]
Since the only harmonic functions on a closed surface are constants, there is a function
\(\gamma(t)\) such that
\[
\frac{\partial}{\partial t}f_0=\Delta f_0+rf_0+\gamma.
\]
The statement follows by setting \(c(t):=-e^{rt}\int_0^te^{r\tau}\gamma(\tau)d\tau\).
\end{proof}

\begin{Lem}
\label{evolution-curvature-potenials}
We have the following evolution equations for the potential function \(f\)
\begin{align}
\nonumber\frac{d}{dt}(f^2)&=\Delta(f^2)-2|\nabla f|^2+2rf^2,\\
\label{evolution-nablaf}\frac{d}{dt}|\nabla f|^2&=\Delta |\nabla f|^2-2|\nabla^2 f|^2+2\mathrm{div}V|\nabla f|^2+r|\nabla f|^2.
\end{align}
\end{Lem}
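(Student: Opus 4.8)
The plan is to derive both identities directly from the evolution equation \eqref{evolution-f} for $f$, with the essential care being that $|\nabla f|^2$ depends on the evolving metric. For the first identity I would begin with $\frac{d}{dt}(f^2)=2f\,\partial_t f$ and substitute \eqref{evolution-f} to obtain $2f\Delta f+2rf^2$. The product rule for the Laplacian on functions, $\Delta(f^2)=2f\Delta f+2|\nabla f|^2$, then immediately rewrites $2f\Delta f$ as $\Delta(f^2)-2|\nabla f|^2$, giving the claimed formula. This step is purely algebraic and uses no geometric input beyond the product rule.

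For the gradient identity the key point is that $|\nabla f|^2=g^{ij}\partial_i f\,\partial_j f$ contains the inverse metric, which evolves. Since \eqref{ricci-flow} gives $\partial_t g_{ij}=(r-R)g_{ij}$ and hence $\partial_t g^{ij}=(R-r)g^{ij}$, differentiating produces a background term $(R-r)|\nabla f|^2$. Differentiating the factors $\partial_j f$, commuting $\partial_t$ with the coordinate derivative, and invoking \eqref{evolution-f} yields $2\langle\nabla f,\nabla(\Delta f+rf)\rangle=2\langle\nabla f,\nabla\Delta f\rangle+2r|\nabla f|^2$. Altogether this gives
\[
\frac{d}{dt}|\nabla f|^2=(R-r)|\nabla f|^2+2\langle\nabla f,\nabla\Delta f\rangle+2r|\nabla f|^2.
\]

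The geometric core is then the Ricci identity \eqref{ricci-identity-lc} for the Levi-Cevita connection. Contracting it with $\nabla f$ and using the Bochner identity $\tfrac12\Delta|\nabla f|^2=\langle\nabla f,\Delta\nabla f\rangle+|\nabla^2 f|^2$ for the rough Laplacian gives $2\langle\nabla f,\nabla\Delta f\rangle=\Delta|\nabla f|^2-2|\nabla^2 f|^2-R_g|\nabla f|^2$, exactly as in the derivation of \eqref{evolution-nabla-R}. Substituting this and collecting the coefficient of $|\nabla f|^2$ produces $(R-r)-R_g+2r=R-R_g+r$, and the relation $R=R_g+2\mathrm{div}V$ converts $R-R_g$ into $2\mathrm{div}V$, giving precisely the stated coefficient $2\mathrm{div}V+r$.

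I expect the main subtlety to be the interplay of the two curvature quantities: the Ricci identity naturally brings in the Levi-Cevita curvature $R_g$, whereas the evolution of the inverse metric brings in the torsion curvature $R$, and only after combining them via $R-R_g=2\mathrm{div}V$ does the torsion divergence term emerge. Maintaining consistent sign conventions for the Laplacian (the analyst's $\Delta=\operatorname{tr}\nabla^2$, matching \eqref{ricci-identity-lc}) is what makes the two contributions add up correctly.
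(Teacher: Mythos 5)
Your proposal is correct and follows essentially the same route as the paper: differentiate $|\nabla f|^2$ using the evolution of the inverse metric and of $f$, apply the Ricci identity \eqref{ricci-identity-lc} together with the Bochner formula, and convert $R-R_g$ into $2\,\mathrm{div}V$ to obtain the stated coefficient. The first identity is handled by the same direct computation the paper alludes to, so there is nothing to add.
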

\begin{proof}
The first equation follows by a direct calculation, for the second one we make use of the 
standard Ricci identity \eqref{ricci-identity-lc}, that is
\begin{align*}
\frac{d}{dt}|\nabla f|^2&=2\langle \nabla \Delta f,\nabla f\rangle+2r|\nabla f|^2+(R-r)|\nabla f|^2\\
&=2\langle\Delta \nabla f,\nabla f\rangle-R_g|\nabla f|^2+(R+r)|\nabla f|^2\\
&=\Delta |\nabla f|^2-2|\nabla^2 f|^2+2\mathrm{div}V|\nabla f|^2+r|\nabla f|^2.
\end{align*}
\end{proof}

\begin{Cor}
Let \((M,g(t))\) be a solution of the normalized Ricci flow with torsion on a closed surface
and let \(f\) be a potential for the curvature. Then there exists a constant \(C\) such that
\begin{equation}
\label{bound-f}
|f|\leq Ce^{rt}.
\end{equation}
\end{Cor}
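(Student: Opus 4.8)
The plan is to obtain \eqref{bound-f} from the scalar maximum principle, exploiting the fact that the normalized potential $f$ satisfies the \emph{linear} evolution equation \eqref{evolution-f}. The cleanest route is to absorb the zeroth-order term by the substitution $\tilde f:=e^{-rt}f$. Differentiating in $t$ and inserting \eqref{evolution-f} gives
\[
\frac{\partial \tilde f}{\partial t}=e^{-rt}\Big(\frac{\partial f}{\partial t}-rf\Big)=e^{-rt}\Delta f=\Delta\tilde f,
\]
so that $\tilde f$ solves the heat equation for the (time-dependent) Laplacian of the flow. On the closed surface $M$ the maximum principle then shows that $\max_M\tilde f(\cdot,t)$ is non-increasing and $\min_M\tilde f(\cdot,t)$ is non-decreasing, whence $\|\tilde f(\cdot,t)\|_{L^\infty}\le\|\tilde f(\cdot,0)\|_{L^\infty}=\|f(\cdot,0)\|_{L^\infty}=:C$. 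Undoing the substitution yields $|f(x,t)|=e^{rt}|\tilde f(x,t)|\le Ce^{rt}$, which is exactly \eqref{bound-f}, with $C=\max_M|f(\cdot,0)|$.

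Alternatively, one can argue directly from the evolution equation for $f^2$ established in Lemma \ref{evolution-curvature-potenials}. Since the term $-2|\nabla f|^2$ is nonpositive, that equation gives the differential inequality $\partial_t(f^2)\le\Delta(f^2)+2rf^2$. Applying Hamilton's scalar maximum principle to $t\mapsto\max_M f^2(\cdot,t)$ — at a spatial maximum one has $\Delta(f^2)\le0$, so the upper Dini derivative obeys $\frac{d^+}{dt}\max_M f^2\le 2r\max_M f^2$ — and then Gronwall's inequality produces $\max_M f^2(\cdot,t)\le e^{2rt}\max_M f^2(\cdot,0)$. Taking square roots gives the same bound. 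Note that the conclusion holds for any sign of $r$; in the case $\chi(M)\le0$, where $r\le0$, the estimate in fact shows that $f$ stays bounded (and decays when $r<0$).

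I do not expect a genuine obstacle here, since the argument is a direct application of the maximum principle to a linear parabolic equation on a compact manifold. The only mild technical point is the justification of differentiating the spatial extremum in time for a metric that is itself evolving; this is handled by the standard version of the scalar maximum principle for time-dependent Laplacians, which applies without change because at each fixed time the Laplacian is a genuine (elliptic) operator and the evolution of $\Delta$ recorded in Lemma \ref{variational-formulas} does not affect its sign at an interior extremum.
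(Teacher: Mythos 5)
Your argument is correct and is exactly the route the paper takes: the paper's proof is the one-line statement that the bound ``follows directly from the maximum principle'' applied to the linear equation \eqref{evolution-f}, and your substitution $\tilde f=e^{-rt}f$ (or equivalently the $f^2$ inequality) is just the standard way of filling in that detail. Nothing further is needed.
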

\begin{proof}
This follows directly from the maximum principle.
\end{proof}
\noindent Using the evolution equation for the curvature potential \eqref{evolution-f} we find
\begin{Lem}
Let \((M,g(t))\) be a solution of the normalized Ricci flow with torsion on a closed surface.
For \(r\leq 0\) there exists a uniform constant C such that
\[
g\leq C,\qquad |\mathrm{div}V|\leq C.
\]
\end{Lem}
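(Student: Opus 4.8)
The plan is to control everything through the conformal factor $u$, where $g(t)=e^{u(t)}g_0$ with $u(0)=0$. First I would extract from the flow equation \eqref{ricci-flow} the pointwise identity $\frac{\partial u}{\partial t}=r-R$ (using $R=R_g+2\dg V$, so that $\partial_t g=(r-R)g=\dot u\,g$), and combine it with the definition of the curvature potential \eqref{curvature-potential}, which gives $\frac{\partial u}{\partial t}=-\Delta f$. Adding the evolution equation \eqref{evolution-f} for the potential, $\frac{\partial f}{\partial t}=\Delta f+rf$, the Laplacian terms cancel and I obtain the clean coupling
\[
\frac{\partial}{\partial t}(u+f)=rf.
\]

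Integrating this in time and using $u(0)=0$ yields
\[
u(t)=f(0)-f(t)+r\int_0^t f\,d\tau.
\]
Now I would insert the a priori bound \eqref{bound-f}, namely $|f|\leq Ce^{rt}$. Because $r\leq 0$, the first two terms are uniformly bounded, and for $r<0$ the remaining term satisfies $|r|\int_0^t e^{r\tau}\,d\tau=1-e^{rt}\leq 1$; when $r=0$ this term vanishes identically and $|f|\leq C$ directly. In all cases $|u(t)|\leq C'$ uniformly in $t$, so $g(t)=e^{u(t)}g_0$ is uniformly bounded above (and in fact also bounded below away from zero), which is the first assertion.

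For the bound on $\dg V$ I would exploit that the one-form $V^{\flat}$ dual to $V_{g(t)}$ is constant along the flow. Writing $V_0=V_{g_0}$ and using the conformal behaviour of the metric in dimension two, one computes $\dg V_{g(t)}=e^{-u(t)}\mathrm{div}_{g_0}V_0$; equivalently this follows from the variational formula for the divergence operator in Lemma \ref{variational-formulas}. Since $\mathrm{div}_{g_0}V_0$ is a fixed smooth function on the compact surface $M$, it is bounded, and combined with the uniform bound on $u$ from the previous step this gives $|\dg V_{g(t)}|\leq C$.

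The argument is otherwise elementary; the one point that really matters is the cancellation producing $\frac{\partial}{\partial t}(u+f)=rf$ together with the sign hypothesis $r\leq 0$, which is exactly what keeps the time integral of $f$ from growing. For $r>0$ this mechanism fails and the conformal factor is no longer controlled by \eqref{bound-f} alone, consistent with the earlier remark that positive Euler characteristic is the delicate case.
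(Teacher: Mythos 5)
Your proposal is correct and follows essentially the same route as the paper: both integrate $\partial_t u=r-R=-\Delta f=-(\partial_t f-rf)$ to get $u(t)=f(0)-f(t)+r\int_0^t f\,d\tau$, and then invoke the bound $|f|\leq Ce^{rt}$, which is uniform precisely when $r\leq 0$. Your derivation of the $\dg V$ bound via the conformal scaling $\mathrm{div}_{e^{u}g_0}V^{\flat}=e^{-u}\mathrm{div}_{g_0}V^{\flat}$ reproduces exactly the paper's formula \eqref{bound-div-V}, just packaged through the already-established bound on $u$ instead of integrating the pointwise ODE for $\mathrm{div}V$.
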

\begin{proof}
We calculate
\begin{align*}
\frac{d}{dt}(\mathrm{div}V)^2=2\Delta f(\mathrm{div}V)^2=2(\frac{df}{dt}-rf)(\mathrm{div V})^2,
\end{align*}  
which can be integrated as
\begin{align}
\label{bound-div-V}
|\mathrm{div}V|=\exp(f(x,t)-f(x,0)-r\int_0^tf(x,\tau)d\tau).
\end{align}
Note that this bound is uniform if $r\leq 0$.
We can rewrite the evolution equation for the metric as
\[
\frac{d}{dt}g=-(\Delta f)g=(\frac{df}{dt}-rf)g
\]
and again the result follows by integration.
\end{proof}

\begin{Prop}
Let \((M,g(t))\) be a solution of the normalized Ricci flow with torsion on a closed surface.
We obtain
\begin{equation}
\label{bound-curvature-t-dependent}
R\leq C,
\end{equation}
where the constant \(C\) depends on \(t\) and is finite for finite values of \(t\).
\end{Prop}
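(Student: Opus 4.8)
The plan is to read off the desired bound from the reaction--diffusion structure of the evolution equation \eqref{evolution-R} by means of the parabolic maximum principle. First I would introduce the spatial maximum
\begin{equation*}
\rho(t):=\max_{x\in M}R(x,t),
\end{equation*}
which is a locally Lipschitz function of \(t\) on the interval of existence of the flow. At a point where this maximum is attained one has \(\Delta R\leq 0\), so applying Hamilton's scalar maximum principle to \eqref{evolution-R} yields the differential inequality
\begin{equation*}
\frac{d}{dt}\rho\leq \rho(\rho-r)=\rho^2-r\rho,
\end{equation*}
understood in the sense of forward difference quotients (equivalently, for almost every \(t\)). This reduces the estimate for \(R\) to an ordinary differential inequality for the single function \(\rho\).

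Next I would compare \(\rho\) with the solution \(\phi\) of the associated Riccati equation \(\phi'=\phi^2-r\phi\) with initial value \(\phi(0)=\max_{x}R(x,0)\). This ODE can be integrated explicitly: the substitution \(\psi=\phi^{-1}\) turns it into the linear equation \(\psi'=r\psi-1\), which is solved by elementary means and stays finite on its maximal interval of existence. The comparison principle for ordinary differential equations then gives
\begin{equation*}
R(x,t)\leq\rho(t)\leq\phi(t)
\end{equation*}
for all \(x\in M\) and all \(t\) for which the right-hand side is defined. Since the flow is smooth on \([0,T)\), the curvature is in particular finite at every fixed \(t<T\); the computation above simply makes the time dependence of the bound explicit, which is \eqref{bound-curvature-t-dependent} with \(C=C(t)=\phi(t)\).

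The hard part is the quadratic reaction term in \eqref{evolution-R}: because it is superlinear and of the "wrong" sign for upper bounds, the comparison object is a Riccati rather than a linear ODE, and \(\phi\) is in general only finite on a finite time interval. This is precisely the reason why at this stage one can extract nothing better than a \emph{time-dependent} upper bound; a genuinely uniform bound, as needed for the convergence statement, must be obtained afterwards by the finer arguments resting on the curvature potential \(f\) and the estimates \eqref{bound-f} and \eqref{bound-div-V}. For the present proposition, however, the explicit Riccati comparison is entirely sufficient.
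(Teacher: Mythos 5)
Your reduction to the Riccati comparison is exactly where the argument breaks down, and you have in fact already noticed the problem without drawing the right conclusion from it: the comparison function \(\phi\) solving \(\phi'=\phi^2-r\phi\), \(\phi(0)=\max R(\cdot,0)\), blows up in finite time whenever \(\max R(\cdot,0)>\max(r,0)\) (for instance, if \(r\le 0\) one has \(\phi'\ge\phi^2\) as soon as \(\phi\ge 0\), so \(\phi\) escapes to \(+\infty\) by time \(1/\phi(0)\), and such initial data are generic). Beyond that blow-up time your comparison gives no information at all, whereas the proposition asserts a bound that is finite for \emph{every} finite \(t\); this is not a cosmetic point, because the time-dependent bound is precisely what is fed into the subsequent long-time existence argument (continuation of the solution past any putative \(T_{\max}\)). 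Your remark that ``the flow is smooth on \([0,T)\), so the curvature is finite at every fixed \(t<T\)'' is circular in this context: the entire purpose of the estimate is to rule out curvature blow-up as \(t\to T\). The direct maximum principle on \eqref{evolution-R} only yields the classical doubling-time estimate, not the proposition.

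The missing idea is the curvature potential. The paper considers \(Z=|\nabla f|^2+\beta(R-r)\) with \(\Delta f=R-r\) and \(\beta\le 1\). In the evolution of \(Z\) the dangerous quadratic reaction term reappears as \(\beta(\Delta f)^2\), but it is absorbed by the good term \(-2|\nabla^2f|^2\) produced by the Bochner-type evolution \eqref{evolution-nablaf} of \(|\nabla f|^2\), using \((\Delta f)^2\le 2|\nabla^2 f|^2\) in dimension two. What survives is \(\tfrac{d}{dt}Z\le\Delta Z+rZ+2\,\mathrm{div}V\,|\nabla f|^2\), and the extra term is controlled by the bound \eqref{bound-div-V} on \(\mathrm{div}V\) together with a maximum-principle bound on \(|\nabla f|^2\); both are time-dependent but finite for all finite \(t\). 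The resulting differential inequality for \(Z\) is \emph{linear} with a locally integrable inhomogeneity, so the maximum principle and Gronwall give a bound that never degenerates in finite time. To repair your argument you would have to replace the Riccati comparison by this (or an equivalent) linearization device.
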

\begin{proof}
We set
\[
Z:=|\nabla f|^2+\beta(R-r),
\]
where \(\beta\) is some positive number. By a direct calculation we obtain
\begin{align*}
\frac{dZ}{\partial t}&=\Delta Z+rZ+2\mathrm{div}V|\nabla f|^2+\beta(\Delta f)^2-2|\nabla^2f|^2 \\ 
&\leq\Delta Z+rZ+2\mathrm{div}V|\nabla f|^2+2|\nabla^2f|^2(\beta-1).
\end{align*}
In the following we choose \(\beta\leq 1\). To bound \(\mathrm{div}V|\nabla f|^2\) we note that
we have a time dependent bound of \(\mathrm{div}V\) from \eqref{bound-div-V} for arbitrary \(r\), denote this bound by \(v_1(t)\).
Note that \(v_1(t)\) is finite for finite values of \(t\). Thus, we obtain from \eqref{evolution-nablaf} that
\[
\frac{d}{dt}|\nabla f|^2\leq\Delta|\nabla f|^2+(r+v_1(t))|\nabla f|^2.
\]
By the maximum principle we obtain
\[
|\nabla f|^2\leq v_2(t),
\]
where \(v_2(t)\) is again finite for finite values of \(t\). Thus, we find
\[
\frac{dZ}{\partial t}\leq\Delta Z+rZ+v_1(t)v_2(t).
\]
Making use of the maximum principle and the Gronwall-Lemma we obtain
\[
Z(t)\leq Z(0)e^{rt}+\int_0^tv_1(s)v_2(s)e^{rs}.
\]
Since the right hand side of this inequality is finite for finite values of \(t\) we obtain the statement.
\end{proof}

\begin{Lem}
Let \((M,g(t))\) be a solution of the normalized Ricci flow with torsion on a closed surface.
\begin{itemize}
 \item If \(r<0\), then
  \[
   R-r\geq (R_{min}(0)-r)e^{rt}.
  \]
 \item If \(r=0\), then
  \[
   R>-\frac{1}{t}.
  \]
 \item If \(r>0\), then
  \[
   R\geq R_{min}(0)e^{-rt}.
  \]
\end{itemize}
In each case the right hand side tends to \(0\) as \(t\to\infty\).
\end{Lem}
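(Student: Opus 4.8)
The plan is to exploit the key structural fact, already supplied by Lemma \ref{lemma-evolution-R}, that the full scalar curvature satisfies exactly the classical Hamilton reaction-diffusion equation $\frac{d}{dt}R=\Delta R+R(R-r)$ with \emph{no} explicit torsion terms remaining. All three bounds are then lower bounds extracted from this single equation by the scalar maximum principle, precisely as in Hamilton's treatment of the Ricci flow on surfaces. Concretely, I would set $R_{min}(t):=\min_{x\in M}R(x,t)$ and note that at a point realizing this spatial minimum one has $\Delta R\geq 0$, so that $R_{min}$ satisfies the ordinary differential inequality
\[
\frac{d}{dt}R_{min}\geq R_{min}(R_{min}-r).
\]
The remaining work is to integrate this inequality in each sign regime and then to pass from $R_{min}$ to $R$ via the pointwise bound $R(x,t)\geq R_{min}(t)$.

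For $r<0$ I would introduce $S:=R-r$, which by the evolution of $R$ satisfies $\frac{d}{dt}S=\Delta S+S^2+rS$, so that $\frac{d}{dt}S_{min}\geq S_{min}^2+rS_{min}\geq rS_{min}$. Multiplying by the integrating factor $e^{-rt}$ shows that $e^{-rt}S_{min}$ is non-decreasing, whence $R-r\geq S_{min}(t)\geq(R_{min}(0)-r)e^{rt}$. For $r>0$ I would instead simply discard the non-negative quadratic term, using $R_{min}^2-rR_{min}\geq -rR_{min}$, so that $\frac{d}{dt}R_{min}\geq -rR_{min}$; the integrating factor $e^{rt}$ then yields $R\geq R_{min}(t)\geq R_{min}(0)e^{-rt}$.

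The borderline case $r=0$ requires the standard nonlinear trick rather than a linear Gronwall estimate, since the inequality reduces to $\frac{d}{dt}R_{min}\geq R_{min}^2$. If $R_{min}(0)\geq 0$, then $R_{min}$ is non-decreasing and stays non-negative, so $R>-1/t$ trivially. On any interval where $R_{min}<0$ I would pass to the positive quantity $-1/R_{min}$, for which $\frac{d}{dt}(-1/R_{min})=R_{min}^{-2}\frac{d}{dt}R_{min}\geq 1$; integrating from $0$ gives $-1/R_{min}(t)\geq t-1/R_{min}(0)>t$, and rearranging this yields $R_{min}(t)>-1/t$. In all three cases the asserted right-hand side tends to $0$ as $t\to\infty$, since $e^{rt}\to 0$ for $r<0$, $e^{-rt}\to 0$ for $r>0$, and $-1/t\to 0$.

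The only genuinely delicate point is the rigorous justification of the first displayed differential inequality: the function $R_{min}(t)$ is merely Lipschitz in $t$, not smooth, so the maximum principle must be phrased in terms of the lower Dini derivative, equivalently via Hamilton's ODE-comparison principle for the infimum of a solution of a scalar reaction-diffusion equation. Once this comparison is in place, every subsequent step is an elementary integration, and the torsion intervenes only through its (harmless) contribution to the definition of $R=R_g+2\mathrm{div}V$, which has already been absorbed into the clean evolution equation of Lemma \ref{lemma-evolution-R}.
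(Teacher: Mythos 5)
Your proposal is correct and is precisely the argument the paper invokes: the paper's proof consists of citing the evolution equation \eqref{evolution-R} together with the maximum principle ``exactly as in [Chow--Knopf, Lemma 5.9]'', and your three case-by-case ODE integrations (including the $-1/R_{min}$ trick for $r=0$ and the Dini-derivative caveat) are just that standard argument written out in full. No gaps.
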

\begin{proof}
This follows from \eqref{evolution-R} and the maximum principle exactly as in \cite[Lemma 5.9]{MR2061425}.
\end{proof}

\begin{Prop}
Let \((M,g(t))\) be a solution of the normalized Ricci flow with torsion on a closed surface.
Then there exists a unique solution for all \(t\in[0,\infty)\).
\end{Prop}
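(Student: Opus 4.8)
\emph{Proof proposal.} The plan is to upgrade the short-time solution of \eqref{evolution-conformal-factor} to a global one by ruling out finite-time blow-up. Standard parabolic theory gives a smooth solution $u$ on a maximal interval $[0,T)$; I would suppose $T<\infty$ and aim to establish uniform $C^\infty$ bounds on $u$ over $[0,T)$. Such bounds force $u(\cdot,t)$ to converge smoothly as $t\to T$, so that $g(T)=e^{u(T)}g_0$ is a smooth metric; restarting the flow from $g(T)$ would then extend the solution beyond $T$, contradicting maximality and yielding $T=\infty$.

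The first step is a two-sided $C^0$ bound on the conformal factor. Integrating the metric evolution exactly as in the proof of \eqref{bound-div-V}, and using $\frac{d}{dt}g=(\frac{df}{dt}-rf)g$ together with $g=e^ug_0$, one finds (up to an additive spatial constant) $u(x,t)=f(x,t)-f(x,0)-r\int_0^tf(x,\tau)\,d\tau$. By \eqref{bound-f} the potential obeys $|f|\le Ce^{rt}$, which is finite on the compact interval $[0,T]$; hence $u$ is bounded in $C^0$, i.e. $c^{-1}g_0\le g(t)\le c\,g_0$ for some $c>1$ uniformly on $[0,T)$. In particular $\Delta_{g(t)}=e^{-u}\Delta_{g_0}$ is uniformly elliptic with coefficients bounded above and below. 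Simultaneously \eqref{bound-div-V} bounds $|\mathrm{div}V|$, while combining the time-dependent upper bound \eqref{bound-curvature-t-dependent} with the lower bound for $R$ from the previous lemma gives two-sided bounds on the curvature $R$ on $[0,T)$; since $R_g=R-2\mathrm{div}V$, the Levi-Cevita curvature $R_g$ is bounded as well.

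The second step is a bootstrap to higher derivatives. From the conformal change relation $R_g=e^{-u}(R_{g_0}-\Delta_{g_0}u)$ the bounds above give a uniform $L^\infty$ bound on $\Delta_{g_0}u=R_{g_0}-e^uR_g$, while the evolution equation controls $\partial_t u$. Interior $L^p$ parabolic estimates applied to the now uniformly parabolic equation \eqref{evolution-conformal-factor} then place $u$ in $W^{2,1}_p$ for every $p$, and Sobolev embedding yields a uniform $C^{1,\alpha}$ bound. With Hölder coefficients in hand, parabolic Schauder theory applied to \eqref{evolution-conformal-factor} upgrades this to $C^{2,\alpha}$, and differentiating the equation repeatedly produces uniform $C^k$ bounds for every $k$ on $[0,T)$; alternatively one may run the Bernstein-type estimate begun in \eqref{evolution-nabla-R}, together with its higher analogues, under the maximum principle to control $|\nabla^k R|$. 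The torsion enters only through $\mathrm{div}V$, and since the one-form dual to $V$ is fixed in time, all estimates on $\mathrm{div}V$ and its derivatives reduce to the estimates on $u$ already obtained.

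The uniform $C^\infty$ bounds make $u(\cdot,t)$ and all its derivatives Cauchy as $t\to T$, so $u(t)\to u(T)\in C^\infty(M)$ and the flow extends past $T$, giving the contradiction. Global uniqueness follows from short-time uniqueness by a standard continuation argument: two solutions on $[0,\infty)$ agree on an initial interval, and the set of times on which they coincide is open (by local uniqueness) and closed, hence all of $[0,\infty)$. I expect the \emph{main obstacle} to be the higher-derivative step: promoting the $C^0$ and curvature bounds to bounds on all derivatives that are genuinely uniform up to $t=T$ — rather than merely locally in time — requires care with the torsion terms and with the dependence of the parabolic constants on the time interval.
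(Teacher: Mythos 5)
Your proposal is correct and follows essentially the same strategy as the paper: obtain a $C^0$ bound on the conformal factor from the already-established curvature/potential estimates (the paper uses the two-sided bound on $R$ to control $\partial_t u=-R+r$, you integrate via the potential $f$ and \eqref{bound-f} — a minor variation), then bootstrap with elliptic and parabolic Schauder theory to get higher regularity, and finally continue the solution past any putative maximal time. Your write-up is somewhat more detailed on the bootstrap and on global uniqueness, but the underlying argument is the same.
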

\begin{proof}
By \eqref{ricci-flow} we are deforming the metric \(g(t)\) in its conformal class. Writing \(g(t)=e^{u(t)}g_0\)
the equation \eqref{ricci-flow} is equivalent to 
\[
\frac{\partial u}{\partial t}=-R(u)+r.
\]
By the bound \eqref{bound-curvature-t-dependent} this yields a bound on \(\big|\frac{\partial u}{\partial t}|\) and
also \(u(t)\). Moreover, we can rewrite \eqref{ricci-flow} as
\begin{equation}
\label{evolution-u}
\frac{\partial u}{\partial t}=\Delta_{g(t)}u-e^{-u(t)}(R_{g_0}+2\mathrm{div}_{g_0}V) +r
\end{equation}
with the initial condition \(u(0)=0\).
Interpreting \eqref{evolution-u} as an elliptic equation we may apply elliptic Schauder theory (\cite{MR925006}, p. 79)
and get that \(u\in C^{1+\alpha}(M)\). Making use of the regularity gained from elliptic Schauder theory we interpret \eqref{evolution-u}
as a parabolic equation. By application of parabolic Schauder theory (\cite{MR925006}, p. 79) we then get that \(u\in C^{2+\alpha}(M)\).
Suppose that there would be a maximal time of existence \(T_{max}\), then using that \(u\in C^{2+\alpha}(M)\), we can continue
the solution beyond \(T_{max}\) yielding the claim.
\end{proof}

\section{Convergence}
In this section we establish convergence of \eqref{ricci-flow} in the cases \(\chi(M)\leq 0\) and
point out the difficulties of the spherical case.
To this end we will often make use of the following identities
\begin{align*}
[\nabla,\Delta]f=-\frac{1}{2}R_g\nabla f,\qquad [\nabla,\Delta]T=R_g*\nabla T+\nabla R_g* T,
\qquad [\frac{d}{dt},\nabla]T= \nabla R*T
\end{align*}
for functions $f$ and for higher tensors $T$, where the last equation holds along \eqref{ricci-flow}. Here, $*$ is Hamilton's notation for a combination of tensor products and contractions. 
Thus, by induction, it is easily seen that
\begin{align*}
[\nabla^k,\Delta]f=\sum_{l=0}^{k-1}\nabla^lR_g*\nabla^{k-l}f,\qquad [\frac{d}{dt},\nabla^k]f=\sum_{l=1}^{k}\nabla^lR*\nabla^{k-l}f
\end{align*}
and in particular
\begin{align}
\label{nablak-laplace-interchange}
[\nabla^k,\Delta]R=\sum_{l=0}^{k-1}[\nabla^lR*\nabla^{k-l}R+\nabla^l\mathrm{div}V*\nabla^{k-l}R],\qquad \quad [\frac{d}{dt},\nabla^k]R=\sum_{l=1}^{k}\nabla^lR*\nabla^{k-l}R.
\end{align}

\begin{Bem}
If the Euler characteristic is non-positive then constant curvature metrics with torsion are unique up to rescaling.
To see this let $g$ be a metric on a surface and $\bar{g}=e^{u}g$.  
A straightforward calculation shows
\begin{align*}
\bar{R}=R_{\bar{g}}+2\mathrm{div}_{\bar{g}}V_{\bar{g}}=e^{-u}(R_g-\Delta u+2\mathrm{div}_gV_g)=e^{-u}(R-\Delta u).
\end{align*}
If we assume that $R=\bar{R}=r$ (constant scalar curvature and same volume) we get
\begin{align*}
\Delta u=r(e^{-u}-1).
\end{align*}
It is immediate that $u$ must be constant on the torus. If $r$ is negative, the maximum principle and the fact that $\int_M e^{2u}d\mu_g=\vol(M,g)$ also imply that $u$ is constant.
\end{Bem}

\subsection{Negative Euler Characteristic}

In this section we will prove the following subcase of Theorem \ref{main-result}:
\begin{Satz}
\label{theorem-negative-euler}
Let \((M,g_0)\) be a closed Riemannian surface.
If \(\chi(M)<0\) the solution of \eqref{ricci-flow} starting at \(g_0\) converges exponentially in any \(C^k\)-norm
to a metric \(g_\infty\) of constant curvature with torsion as \(t\to\infty\).
\end{Satz}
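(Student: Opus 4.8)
The plan is to use the strict negativity $r<0$, which follows from $\chi(M)<0$ and $r=4\pi\chi(M)/\vol(M,g)$, to promote the a priori estimates of the previous section into exponential decay. By \eqref{bound-f} the curvature potential already satisfies $|f|\le Ce^{rt}$, so it decays exponentially in $C^0$; the whole argument consists in upgrading this to exponential decay of $R-r$ and of all its covariant derivatives, after which convergence of the conformal factor is automatic.

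First I would establish the gradient decay $|\nabla f|^2\le Ce^{rt/2}$. To this end I consider the auxiliary quantity $F:=|\nabla f|^2+Af^2$ for a sufficiently large constant $A$. Adding the two evolution equations of Lemma \ref{evolution-curvature-potenials}, discarding the nonpositive term $-2|\nabla^2f|^2$, and using the uniform bound $|\mathrm{div}V|\le C$ together with $r<0$, one checks that $A$ may be chosen so that $\frac{d}{dt}F\le\Delta F+\frac{r}{2}F$. The maximum principle then gives $F\le F(0)e^{rt/2}$, which yields the desired decay of $|\nabla f|^2$ (and incidentally of $f^2$).

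The main obstacle is a uniform two-sided bound on $R$: the reaction term $R(R-r)$ in \eqref{evolution-R} is positive wherever $R>0$, so the naive maximum principle produces no upper bound. I would bypass this with the quantity $P:=(R-r)+|\nabla f|^2$. Combining \eqref{evolution-R} with the evolution of $|\nabla f|^2$ and invoking the two-dimensional Bochner inequality $|\nabla^2f|^2\ge\frac{1}{2}(\Delta f)^2=\frac{1}{2}(R-r)^2$, the quadratic terms cancel in such a way that $\frac{d}{dt}P\le\Delta P+rP+2\,\mathrm{div}V\,|\nabla f|^2$. Since $|\mathrm{div}V|\le C$ and $|\nabla f|^2\le Ce^{rt/2}$ by the previous step, the inhomogeneity is $\le Ce^{rt/2}$, and the maximum principle combined with Gronwall's lemma gives $P\le Ce^{rt/2}$. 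As $|\nabla f|^2\ge0$ this bounds $R-r$ from above, and together with the lower bound $R-r\ge(R_{min}(0)-r)e^{rt}$ proved above it yields $|R-r|\le Ce^{rt/2}$, hence exponential decay in $C^0$ and, in particular, a uniform bound on $R$.

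With $R$ uniformly bounded, and hence also $R_g$ and $\mathrm{div}V$ (recall $\mathrm{div}_{g(t)}V=e^{-u}\mathrm{div}_{g_0}V$ is a fixed function times $e^{-u}$, so all its derivatives are controlled by those of $u$), I would run the standard Bernstein induction. Using the commutator identities \eqref{nablak-laplace-interchange} one obtains $\frac{d}{dt}|\nabla^kR|^2\le\Delta|\nabla^kR|^2-2|\nabla^{k+1}R|^2+(\text{lower order})$, and interpolation against the lower-order uniform bounds yields a uniform bound on each $|\nabla^kR|$. Interpolating these uniform bounds against the exponential decay of $R-r$ then upgrades every $|\nabla^kR|$ to exponential decay. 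Finally, since $\partial_tu=r-R$, the $C^k$ exponential decay of $R-r$ gives $u(t)-u_\infty=\int_t^\infty(R-r)\,ds$ with exponentially small $C^k$ norm; thus $u$ converges exponentially in every $C^k$ to a limit $u_\infty$, and the limiting metric $g_\infty=e^{u_\infty}g_0$ satisfies $R=r$, i.e. it is a metric of constant curvature with torsion.
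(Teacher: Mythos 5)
Your argument is correct and follows essentially the same route as the paper: a maximum-principle estimate on a combination of \(|\nabla f|^2\), \(f^2\) and \(R-r\) (using \((\Delta f)^2\le 2|\nabla^2 f|^2\) and \(r<0\)) to force exponential decay of \(R-r\), followed by an induction on derivatives of \(R\) and \(\mathrm{div}V\). The only differences are cosmetic: the paper runs the first step with the single quantity \(G=|\nabla f|^2+\alpha f^2+\beta(R-r)\) and obtains decay of \(|\nabla^kR|\) directly inside the induction by pairing \(|\nabla^kR|^2\) with \(\alpha|\nabla^{k-1}R|^2\), whereas you split the first step in two and use interpolation to upgrade uniform higher-derivative bounds to decaying ones.
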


Our proof follows a similar approach as \cite[Chapter 5]{MR2061425}.
Before we prove the result we make the following observation.
\begin{Bem}
There do not exist conformal Killing vector fields on surfaces with negative Euler characteristic.
For metrics of constant negative curvature this follows from \cite[Theorem 4.44]{MR2371700}.
For arbitrary metrics the claim follows from conformal invariance.
Hence there do not exist a non-trivial self-similar solution of \eqref{ricci-flow} on surfaces with negative Euler characteristic.
\end{Bem}
\noindent We set
\[
G:=|\nabla f|^2+\alpha f^2+\beta (R-r)
\]
with \(\alpha,\beta>0\).
By a direct calculation we obtain the following
\begin{Lem}
Let \((M,g(t))\) be a solution of \eqref{ricci-flow}. Then the quantity \(G\) evolves by 
\begin{equation}
\label{evolution-G}
\frac{d}{dt}G=\Delta G+(2\mathrm{div}V+r-2\alpha)|\nabla f|^2+2\alpha r f^2+\beta r(R-r)+\beta(\Delta f)^2-2|\nabla^2f|^2.
\end{equation}
\end{Lem}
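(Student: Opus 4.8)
The plan is to differentiate $G$ term by term along the flow and insert the evolution equations already at our disposal. Since $r$ is constant along \eqref{ricci-flow}, I would write
\[
\frac{d}{dt}G=\frac{d}{dt}|\nabla f|^2+\alpha\frac{d}{dt}(f^2)+\beta\frac{d}{dt}R,
\]
and then substitute the two identities of Lemma \ref{evolution-curvature-potenials} for $\frac{d}{dt}|\nabla f|^2$ and $\frac{d}{dt}(f^2)$, together with the evolution equation \eqref{evolution-R} of Lemma \ref{lemma-evolution-R} for $\frac{d}{dt}R$. This reduces the whole computation to collecting the resulting terms.

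The first step is to gather the Laplacian contributions. The three substituted equations supply $\Delta|\nabla f|^2$, $\alpha\Delta(f^2)$ and $\beta\Delta R$; since $\Delta R=\Delta(R-r)$, these recombine by linearity into $\Delta G$, which accounts for the leading term on the right hand side of \eqref{evolution-G}. Next I would collect the terms proportional to $|\nabla f|^2$, namely $2\mathrm{div}V|\nabla f|^2$ and $r|\nabla f|^2$ from the gradient evolution and $-2\alpha|\nabla f|^2$ from $\alpha\frac{d}{dt}(f^2)$, which combine to the stated coefficient $(2\mathrm{div}V+r-2\alpha)|\nabla f|^2$. The Hessian term $-2|\nabla^2f|^2$ and the term $2\alpha rf^2$ are carried over unchanged.

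The only point requiring a small manipulation is the curvature reaction term $\beta R(R-r)$ coming from \eqref{evolution-R}. Here I would invoke the defining equation \eqref{curvature-potential}, that is $\Delta f=R-r$, so that $(\Delta f)^2=(R-r)^2$, and split
\[
\beta R(R-r)=\beta(R-r)(R-r)+\beta r(R-r)=\beta(\Delta f)^2+\beta r(R-r),
\]
which produces exactly the two summands $\beta(\Delta f)^2$ and $\beta r(R-r)$ appearing in \eqref{evolution-G}. Assembling these pieces yields the claimed formula. There is no genuine analytic obstacle in this lemma; the computation is entirely algebraic, and the one place where care is needed is this rewriting of the curvature term via the curvature potential, the rest being straightforward bookkeeping of the linear combination.
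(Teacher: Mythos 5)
Your proposal is correct and follows exactly the route of the paper's own (one-line) proof: substitute \eqref{evolution-R} and the two identities of Lemma \ref{evolution-curvature-potenials}, and rewrite $\beta R(R-r)$ as $\beta(\Delta f)^2+\beta r(R-r)$ via $\Delta f=R-r$, which is precisely the auxiliary identity the paper records for $\frac{d}{dt}(R-r)$. The bookkeeping of the remaining terms checks out.
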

\begin{proof}
This follows from \eqref{evolution-R}, Lemma \ref{evolution-curvature-potenials} and the identity
\begin{align*}
\frac{d}{dt}(R-r)=\Delta(R-r)+r(R-r)+(\Delta f)^2.
\end{align*}
\end{proof}

\begin{Lem}
Let \((M,g(t))\) with \(\chi(M)<0\) be a solution of \eqref{ricci-flow}. Then we have the estimate
\begin{align*}
|R-r|\leq Ce^{r/2t}.
\end{align*}
\end{Lem}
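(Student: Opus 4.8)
The plan is to apply the scalar maximum principle to the quantity $G=|\nabla f|^2+\alpha f^2+\beta(R-r)$, whose evolution is recorded in \eqref{evolution-G}, and to choose the free parameters $\alpha,\beta>0$ so that every reaction term becomes non-positive up to an exponentially decaying error. Note first that $\chi(M)<0$ forces $r<0$, so the uniform bound $|\mathrm{div}V|\le C$ coming from \eqref{bound-div-V} is at my disposal. I would begin by disposing of the second-order terms in \eqref{evolution-G}: since $\Delta f=R-r$ and, on a surface, the trace Cauchy--Schwarz inequality gives $|\nabla^2 f|^2\ge\frac12(\Delta f)^2$, the choice $\beta\le 1$ makes $\beta(\Delta f)^2-2|\nabla^2 f|^2\le(\beta-1)(\Delta f)^2\le 0$, which may simply be dropped.

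It then remains to control the zeroth-order terms $(2\mathrm{div}V+r-2\alpha)|\nabla f|^2+2\alpha r f^2+\beta r(R-r)$. Using $|\mathrm{div}V|\le C$, I would fix $\alpha$ large enough that $2\mathrm{div}V+r-2\alpha\le\frac r2$ holds pointwise; since $r<0$ one also has $2\alpha r f^2\le\frac{r\alpha}{2}f^2$. Comparing with $\frac r2 G=\frac r2|\nabla f|^2+\frac{r\alpha}{2}f^2+\frac{r\beta}{2}(R-r)$, the reaction part is bounded by $\frac r2 G+\frac{r\beta}{2}(R-r)$. The leftover term $\frac{r\beta}{2}(R-r)$ is non-positive wherever $R\ge r$, and where $R<r$ I would invoke the earlier lower bound $R-r\ge(R_{min}(0)-r)e^{rt}$ to estimate it by $Ce^{rt}$. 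Altogether this gives the differential inequality
\[
\frac{d}{dt}G\le\Delta G+\frac r2 G+Ce^{rt}.
\]

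Evaluating at a spatial maximum of $G$, where $\Delta G\le 0$, and applying Gronwall's lemma, together with the fact that $\int_0^t e^{rs/2}\,ds$ stays bounded for $r<0$, yields $\max_M G(t)\le Ce^{rt/2}$. Since $|\nabla f|^2+\alpha f^2\ge 0$, this forces $R-r\le\beta^{-1}G\le Ce^{rt/2}$, the desired upper bound. For the matching lower bound I would again use $R-r\ge(R_{min}(0)-r)e^{rt}$ and note that $e^{rt}\le e^{rt/2}$ for $r<0$, so that $R-r\ge -Ce^{rt/2}$ as well; combining the two estimates gives $|R-r|\le Ce^{rt/2}$. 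The main obstacle is the second step: the term $\mathrm{div}V\,|\nabla f|^2$ is bounded but does not decay, so it must be absorbed by enlarging $\alpha$, while the sign-indefinite term $\beta r(R-r)$ has to be handled through the previously established lower bound on the curvature. It is precisely this interplay between the $\alpha f^2$ reservoir and the lower curvature bound that produces the decay rate $r/2$.
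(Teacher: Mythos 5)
Your proof is correct. It is built on the same quantity \(G=|\nabla f|^2+\alpha f^2+\beta(R-r)\) and the same preliminary choices (\(\beta\le 1\) to absorb \(\beta(\Delta f)^2-2|\nabla^2f|^2\), and \(\alpha\) large against the uniform bound on \(\mathrm{div}V\) valid for \(r<0\)), but the mechanism producing the rate \(e^{rt/2}\) and the two-sidedness is genuinely different from the paper's. The paper keeps the full reaction coefficient, i.e.\ bounds the zeroth-order terms by \(rG\) without any splitting, so the maximum principle gives the one-sided estimate \(R-r\le Ce^{rt}\); it then feeds this into the evolution equation \(\frac{d}{dt}(R-r)^2=\Delta(R-r)^2-2|\nabla R|^2+2R(R-r)^2\), where for large \(t\) the coefficient \(2R=2r+2(R-r)\le r\), so a second maximum principle argument yields \((R-r)^2\le Ce^{rt}\) and hence the two-sided decay at rate \(e^{rt/2}\). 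You instead deliberately give up half of the reaction coefficient, treat \(\frac{r\beta}{2}(R-r)\) as an inhomogeneity of size \(Ce^{rt}\) via the previously established lower bound \(R-r\ge(R_{\min}(0)-r)e^{rt}\), and conclude \(G\le Ce^{rt/2}\) by Gronwall; the same lower bound then supplies the matching estimate from below. Both routes are sound, and yours avoids the second parabolic equation at the cost of a slightly more delicate bookkeeping of constants. As an aside, both arguments actually leave something on the table: the paper's first step already gives \(R-r\le Ce^{rt}\), which together with \(R-r\ge(R_{\min}(0)-r)e^{rt}\) yields \(|R-r|\le Ce^{rt}\), a faster decay than the stated \(e^{rt/2}\); the weaker rate is simply all that is needed for the subsequent derivative estimates.
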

\begin{proof}
We estimate the evolution equation for the quantity \(G\) from \eqref{evolution-G} and find
\begin{align*}
\frac{d}{dt}G\leq \Delta G+(2\mathrm{div}V+r-2\alpha)|\nabla f|^2+2\alpha r f^2 +\beta r(R-r)+2(\beta-1)|\nabla^2f|^2.
\end{align*}
Choosing \(\beta\leq 1\) and \(\alpha\geq\mathrm{div}V-\frac{r}{2}\) we obtain
\begin{align*}
R-r\leq Ce^{rt}
\end{align*}
by the maximum principle.
Moreover, we have the differential inequality
\begin{align*}
\frac{d}{dt}(R-r)^2=\Delta(R-r)^2-2|\nabla R|^2+2R(R-r)^2\leq \Delta(R-r)^2+2r(R-r)^2+Ce^{rt}(R-r)^2.
\end{align*}
In particular, we have
\begin{align*}
\frac{d}{dt}(R-r)^2=\Delta(R-r)^2+r(R-r)^2
\end{align*}
for large \(t\), which yields the result by the maximum principle. 
\end{proof}

\begin{Lem}
Let \((M,g(t))\) with \(\chi(M)<0\) be a solution of \eqref{ricci-flow}. Then we have the estimates
\begin{align*}
|\nabla R|\leq Ce^{r/2t}, \qquad |\nabla\mathrm{div}V|\leq C.
\end{align*}
\end{Lem}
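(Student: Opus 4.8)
The plan is to prove the two estimates in turn, feeding the decay $|R-r|\leq Ce^{rt/2}$ from the preceding lemma together with the uniform bounds $|\mathrm{div}V|\leq C$ and $g\leq C$ (recall $r<0$) into maximum-principle and Gronwall arguments. For the gradient bound on $R$ I would work with the coupled quantity
\[
H:=|\nabla R|^2+A(R-r)^2
\]
for a large constant $A>0$ to be fixed. Combining the evolution equation \eqref{evolution-nabla-R} with the identity $\frac{d}{dt}(R-r)^2=\Delta(R-r)^2-2|\nabla R|^2+2R(R-r)^2$ already used above, one obtains
\[
\frac{d}{dt}H\leq\Delta H-2|\nabla^2R|^2+\big[(4R-3r+2\mathrm{div}V)-2A\big]|\nabla R|^2+2AR(R-r)^2.
\]
Writing $4R-3r=r+4(R-r)$ and using $|R-r|\leq Ce^{rt/2}\to 0$ together with $|\mathrm{div}V|\leq C$, for $t$ large the bracket is bounded above by $r$ as soon as $A$ exceeds the uniform bound on $2(R-r)+\mathrm{div}V$; likewise, since $R\to r<0$ uniformly we have $2R\leq r$ for $t$ large, so $2AR(R-r)^2\leq Ar(R-r)^2$. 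Discarding $-2|\nabla^2R|^2\leq 0$, this gives $\frac{d}{dt}H\leq\Delta H+rH$ for all sufficiently large $t$, whence the maximum principle yields $H\leq Ce^{rt}$ and therefore $|\nabla R|\leq Ce^{rt/2}$; on the initial compact time interval $H$ is bounded by smoothness of the flow, and this is absorbed into the constant.

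For the second estimate I would exploit that $\mathrm{div}V$ satisfies the pure reaction equation $\frac{d}{dt}\mathrm{div}V=(R-r)\mathrm{div}V$ (cf.\ the derivation of \eqref{bound-div-V}), which contains no diffusion term. Differentiating this and using $\frac{d}{dt}g^{ij}=(R-r)g^{ij}$, a direct computation gives
\[
\frac{d}{dt}|\nabla\mathrm{div}V|^2=3(R-r)|\nabla\mathrm{div}V|^2+2\,\mathrm{div}V\,\langle\nabla R,\nabla\mathrm{div}V\rangle .
\]
Since there is no Laplacian this is a genuinely pointwise differential inequality. Estimating the cross term by Young's inequality, $2|\mathrm{div}V||\nabla R|\,|\nabla\mathrm{div}V|\leq |\mathrm{div}V||\nabla R|(1+|\nabla\mathrm{div}V|^2)\leq Ce^{rt/2}(1+|\nabla\mathrm{div}V|^2)$, and using $|R-r|,|\nabla R|\leq Ce^{rt/2}$ and $|\mathrm{div}V|\leq C$, one arrives at
\[
\frac{d}{dt}\big(1+|\nabla\mathrm{div}V|^2\big)\leq Ce^{rt/2}\big(1+|\nabla\mathrm{div}V|^2\big).
\]
As $r<0$, the coefficient $Ce^{rt/2}$ is integrable on $[0,\infty)$, so pointwise Gronwall followed by taking the spatial supremum yields the uniform bound $|\nabla\mathrm{div}V|\leq C$.

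The main obstacle is the first estimate. In the torsion-free setting the reaction coefficient of $|\nabla R|^2$ is $4R-3r$, which tends to $r<0$ and hence is automatically good for large $t$; the torsion produces the extra term $2\mathrm{div}V|\nabla R|^2$, whose coefficient is of indefinite sign and, as \eqref{bound-div-V} shows, does \emph{not} decay. It can therefore not be treated as a lower-order perturbation, and the role of the coupling to $A(R-r)^2$ with a sufficiently large $A$ is precisely to manufacture the negative term $-2A|\nabla R|^2$ that dominates it. Once this is arranged the remaining steps are routine applications of the maximum principle and Gronwall's lemma.
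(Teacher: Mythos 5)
Your proposal is correct and follows essentially the same route as the paper: the coupled quantity $|\nabla R|^2+\alpha(R-r)^2$ with $\alpha$ large enough to dominate $2(R-r)+\mathrm{div}V$, the maximum principle to get $|\nabla R|\leq Ce^{rt/2}$, and then the pure reaction equation $\frac{d}{dt}|\nabla\mathrm{div}V|^2=3(R-r)|\nabla\mathrm{div}V|^2+2\,\mathrm{div}V\langle\nabla R,\nabla\mathrm{div}V\rangle$ integrated via Gronwall. You merely spell out the large-$t$ restriction and the Young/Gronwall step that the paper leaves implicit.
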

\begin{proof}
Using \eqref{evolution-nabla-R} we get for $\alpha>0$
\begin{align*}
\frac{d}{dt}(|\nabla R|^2+\alpha(R-r)^2)\leq& \Delta(|\nabla R|^2+\alpha(R-r)^2)+(4R-3r+2\mathrm{div}V-2\alpha)|\nabla R|^2 \\
&+2\alpha R(R-r)^2\\
\leq & \Delta(|\nabla R|^2+\alpha(R-r)^2)+r(|\nabla R|^2+\alpha(R-r)^2),
\end{align*}
where we chose \(\alpha\geq 2(R-r)+\mathrm{div}V\) in the second step.
By application of the maximum principle this yields
\begin{align*}
|\nabla R|\leq Ce^{r/2t}.
\end{align*}
Now we deduce that $\nabla\mathrm{div}V$ is bounded. In fact
\begin{align*}
\frac{d}{dt}|\nabla \mathrm{div}V|^2=3(R-r)|\nabla \mathrm{div}V|^2+2\mathrm{div}V\langle \nabla R,\nabla \mathrm{div}V\rangle
\end{align*}
and the bound on $\mathrm{div}V$ and the decay of $R-r$ and $\nabla R$ immediately implies a bound on $\nabla\mathrm{div}V$.
\end{proof}

Making use of the same methods we now establish exponential decay of the \(k\)-th derivative of the curvature.

\begin{Prop}
Let \((M,g(t))\) with \(\chi(M)<0\) be a solution of \eqref{ricci-flow}. Then we have the estimates
\begin{align*}
|\nabla^kR|\leq C_ke^{r/2t}
\end{align*}
for all \(k>0\).
\end{Prop}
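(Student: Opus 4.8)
The plan is to prove the estimate $|\nabla^k R| \leq C_k e^{r/2 t}$ by induction on $k$, following the scheme already established for $k=0$ and $k=1$ in the preceding lemmas. The base cases are done, so I assume the decay $|\nabla^l R| \leq C_l e^{r/2 t}$ and the uniform bounds $|\nabla^l \operatorname{div}V| \leq C_l$ hold for all $l < k$, and aim to bootstrap to order $k$.

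\medskip
\noindent\emph{Setting up the evolution equation.} First I would compute the evolution of $|\nabla^k R|^2$ along the flow. Using the commutator formulas \eqref{nablak-laplace-interchange}, the evolution equation \eqref{evolution-R} for $R$, and the variational formula $\frac{d}{dt}|\nabla^k R|^2 = 2\langle \frac{d}{dt}\nabla^k R, \nabla^k R\rangle + (R-r)|\nabla^k R|^2$ (the factor coming from $\frac{d}{dt}$ acting on the metric in the norm), a direct calculation should yield a parabolic inequality of the schematic form
\begin{align*}
\frac{d}{dt}|\nabla^k R|^2 = \Delta |\nabla^k R|^2 - 2|\nabla^{k+1} R|^2 + \big(c_1 R + c_2 r + c_3 \operatorname{div}V\big)|\nabla^k R|^2 + \sum_{l=1}^{k} \nabla^l R * \nabla^{k-l} R * \nabla^k R,
\end{align*}
where the final sum collects all the lower-order commutator terms. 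The crucial point is that every term in that sum involves at least one factor $\nabla^l R$ with $1 \leq l \leq k$, and by the inductive hypothesis each such factor with $l < k$ decays like $e^{r/2 t}$, while the bounded factors $\nabla^{k-l}\operatorname{div}V$ are controlled uniformly.

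\medskip
\noindent\emph{Constructing the test quantity and applying the maximum principle.} As in the $k=1$ case, I would consider $H_k := |\nabla^k R|^2 + \alpha_k |\nabla^{k-1} R|^2$ for a suitably large constant $\alpha_k$. The coefficient of $|\nabla^k R|^2$ in its own evolution contains the factor $(4R - 3r + 2\operatorname{div}V + \dots)$, which is not manifestly negative; adding the lower-order term $\alpha_k |\nabla^{k-1} R|^2$ and choosing $\alpha_k$ large enough (using that $R - r$, $\operatorname{div}V$, and all lower derivatives are already bounded) absorbs the bad coefficient, so that $\frac{d}{dt}H_k \leq \Delta H_k + r H_k + (\text{inhomogeneity})$, where the inhomogeneity decays at the rate $e^{r/2 t}$ thanks to the inductive hypotheses. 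Since $r < 0$ for $\chi(M) < 0$, the maximum principle together with a Gronwall argument then forces $|\nabla^k R|^2 \leq C_k e^{r t}$, i.e. $|\nabla^k R| \leq C_k e^{r/2 t}$, completing the induction. Finally, as in the previous lemma, the corresponding uniform bound $|\nabla^k \operatorname{div}V| \leq C_k$ follows by differentiating the relation $R = R_g + 2\operatorname{div}V$ and using the decay of $\nabla^k R$ together with the already-established lower-order bounds.

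\medskip
\noindent\emph{Main obstacle.} The hard part will be bookkeeping the inhomogeneous terms: I must verify that the commutator expansion genuinely produces no uncontrolled top-order factor $\nabla^{k+1}R$ outside the good $-2|\nabla^{k+1}R|^2$ term, and that in each product $\nabla^l R * \nabla^{k-l} R$ at most one factor can carry the full $k$ derivatives, so that the inductive decay actually applies. The interplay between the $\operatorname{div}V$ terms (only boundedness available, not decay of $\operatorname{div}V$ itself, only of its derivatives) and the choice of $\alpha_k$ requires care, but it is exactly parallel to the $k=1$ argument and presents no new conceptual difficulty beyond the careful tracking of which factors decay and which are merely bounded.
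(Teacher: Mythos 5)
Your induction scheme, the evolution inequality for $|\nabla^k R|^2$, and the device of adding $\alpha_k|\nabla^{k-1}R|^2$ to absorb the bad coefficient via the good term $-2|\nabla^{k}R|^2$ in the evolution of the lower-order quantity are exactly what the paper does; the commutator bookkeeping you worry about is confirmed by \eqref{nablak-laplace-interchange}, which produces only terms of order at most $k$ in $R$ and $\mathrm{div}V$, so that part of your argument is sound and essentially identical to the paper's.

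There is, however, a genuine gap in how you propose to close the induction step. To run the argument at order $k+1$ you need the uniform bound $|\nabla^k\mathrm{div}V|\leq C_k$, and you claim to obtain it ``by differentiating the relation $R=R_g+2\,\mathrm{div}V$ and using the decay of $\nabla^kR$.'' This gives $2\nabla^k\mathrm{div}V=\nabla^kR-\nabla^kR_g$, so you would also need a uniform bound on $\nabla^kR_g$ — but no such bound has been established at this stage (indeed, in this setting control of $\nabla^kR_g$ and of $\nabla^k\mathrm{div}V$ are essentially equivalent pieces of information, so the step is circular as written). The paper instead differentiates the transport equation $\frac{d}{dt}\mathrm{div}V=(R-r)\,\mathrm{div}V$, commutes $\frac{d}{dt}$ with $\nabla^k$ to get
\begin{align*}
\frac{d}{dt}\nabla^k\mathrm{div}V=\sum_{l=1}^{k}\nabla^l(R-r)*\nabla^{k-l}\mathrm{div}V,
\end{align*}
and then uses the just-proved decay $|\nabla^lR|\leq C e^{rt/2}$ together with the inductive bounds on $\nabla^{k-l}\mathrm{div}V$ to obtain $\frac{d}{dt}|\nabla^k\mathrm{div}V|^2\leq Ce^{rt/2}|\nabla^k\mathrm{div}V|$, whose right-hand side is integrable in time since $r<0$; integration then yields the uniform bound. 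You should replace your closing step by this argument; with that substitution the rest of your proof goes through.
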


\begin{Bem}
Note that this Proposition implies Theorem \ref{theorem-negative-euler}.
\end{Bem}

\begin{proof}
Making use of \eqref{nablak-laplace-interchange} we get
\begin{align*}
\frac{d}{dt}|\nabla^kR|^2= &\Delta |\nabla^kR|^2-2|\nabla^{k+1}R|^2-(k+2)r|\nabla^kR|^2+\sum_{l=0}^k\nabla^lR*\nabla^{k-l}R*\nabla^kR\\&+\sum_{l=1}^{k-1}\nabla^l\mathrm{div}V*\nabla^{k-l}R*\nabla^kR.
\end{align*}
Now we proceed by induction on $k$. Assume that
\begin{align*}
|\nabla^lR|\leq C(l)e^{r/2t}, \qquad |\nabla^l\mathrm{div}V|\leq C(l) \text{ for all } 1\leq l\leq k-1.
\end{align*}
From the above evolution equation and by the induction hypothesis, we have
\begin{align*}
\frac{d}{dt}|\nabla^kR|^2&\leq\Delta |\nabla^kR|^2+(C_1(|R|+|\mathrm{div}V|)+C_2-(k+2)r)|\nabla^kR|^2+e^{rt},\\
\frac{d}{dt}|\nabla^{k-1}R|^2&\leq\Delta|\nabla^{k-1}R|^2-2|\nabla^kR|^2+C_3|\nabla^{k-1}R|^2+C_4e^{r/2t}|\nabla^{k-1}R|\\
&\leq\Delta|\nabla^{k-1}R|^2-2|\nabla^kR|^2+C_4e^{rt}.
\end{align*}
For $\alpha>0$ large enough we find
\begin{align*}
\frac{d}{dt}(|\nabla^kR|^2+\alpha|\nabla^{k-1}R|^2)&\leq \Delta(|\nabla^kR|^2+\alpha|\nabla^{k-1}R|^2)\\&
+(C_1(|R|+|\mathrm{div}V|)+C_2-(k+2)r-2\alpha)|\nabla^kR|^2+C_5e^{rt}\\
&\leq \Delta(|\nabla^kR|^2+\alpha|\nabla^{k-1}R|^2)-(\epsilon-r)(|\nabla^kR|^2+\alpha|\nabla^{k-1}R|^2)+C_6e^{rt}
\end{align*}
and thus by the maximum principle
\begin{align*}
|\nabla^kR|\leq Ce^{r/2t}.
\end{align*}
To finish the induction step, we have to show that $\nabla^k\mathrm{div}V$ is bounded.
From $\frac{d}{dt}\mathrm{div}V=(R-r)\mathrm{div}V$, we get by differentiating
\begin{align*}
\frac{d}{dt}\nabla^k\mathrm{div}V&=[\frac{d}{dt},\nabla^k]\mathrm{div}V+\nabla^k\frac{d}{dt}\mathrm{div}V
=\sum_{l=1}^k\nabla^l(R-r)*\nabla^{k-l}\mathrm{div}V.
\end{align*}
This yields the estimate
\begin{align*}
\frac{d}{dt}|\nabla^k\mathrm{div}V|^2&\leq\sum_{l=1}^kC_l|\nabla^l(R-r)||\nabla^{k-l}\mathrm{div}V||\nabla^k \mathrm{div}V|\leq C_7e^{r/2t}|\nabla^k\mathrm{div}V|,
\end{align*}
which implies the bound of $|\nabla^k\mathrm{div}V|$ and completes the induction step.
\end{proof}

\subsection{Zero Euler Characteristic}
In this section we will prove the remaining subcase of Theorem \ref{main-result}.
\begin{Satz}
\label{theorem-zero-euler}
Let \((M,g_0)\) be a closed Riemannian surface.
If \(\chi(M)=0\) the solution of \eqref{ricci-flow} starting at \(g_0\) converges exponentially
to a metric \(g_\infty\) of constant curvature with torsion as \(t\to\infty\).
\end{Satz}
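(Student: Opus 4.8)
The plan is to mirror the strategy used for the negative Euler characteristic case (Theorem \ref{theorem-negative-euler}), but to account for the fact that now $r = 4\pi\chi(M)/\vol(M,g) = 0$. The vanishing of $r$ means the exponential decay rates $e^{rt/2}$ degenerate to constants, so I cannot obtain exponential decay directly from the same maximum-principle arguments; I expect this to be the main obstacle. Instead, I would first establish that $R$ converges to $r = 0$ (without a rate) and only afterwards bootstrap to exponential convergence. The Gauss--Bonnet normalization now reads $\int_M R\,d\mu = 4\pi\chi(M) = 0$, so $R$ has vanishing integral against the (constant-in-time) volume form.

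First I would recall the lower bound from the Lemma preceding the longtime-existence result: in the case $r = 0$ one has $R > -1/t$, so $R_{\min}(t) \to 0$. Combined with the time-dependent upper bound \eqref{bound-curvature-t-dependent} this controls $R$ on finite intervals. To upgrade to decay, I would analyze the evolution \eqref{evolution-R}, which with $r = 0$ becomes $\frac{d}{dt}R = \Delta R + R^2$. Testing against $R$ and integrating over $M$ (using $\int_M R\,d\mu$ constant and the volume evolution from Lemma \ref{variational-formulas}) should yield a differential inequality for $\int_M (R-r)^2\,d\mu = \int_M R^2\,d\mu$ forcing it to zero. The key structural point is that $\mathrm{div}V$ is uniformly bounded (by the Lemma giving $|\mathrm{div}V| \le C$ for $r \le 0$), and that $|f| \le C e^{rt} = C$ is now uniformly bounded by the Corollary \eqref{bound-f}; these uniform bounds replace the decaying ones used in the $\chi < 0$ case.

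The technical heart is the gradient and higher-derivative estimates. For $r = 0$ the quantity $G = |\nabla f|^2 + \alpha f^2 + \beta(R-r)$ evolves by \eqref{evolution-G}, in which the terms $2\alpha r f^2$ and $\beta r (R-r)$ now vanish; choosing $\beta \le 1$ and $\alpha \ge \mathrm{div}V$ (using the uniform bound on $\mathrm{div}V$) gives $\frac{d}{dt}G \le \Delta G + 2(\beta-1)|\nabla^2 f|^2 \le \Delta G$, so by the maximum principle $G$ is uniformly bounded and hence $|\nabla f|^2$ and $R-r$ are uniformly bounded. To extract actual decay I would integrate the evolution equation \eqref{evolution-nablaf} for $|\nabla f|^2$ over $M$; the term $-2\int_M |\nabla^2 f|^2$ together with the Poincar\'e-type coercivity on a closed surface (and the already-established integral decay of $R-r = \Delta f$) should drive $\int_M |\nabla f|^2\,d\mu \to 0$. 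Once the curvature and its gradient are shown to decay, the same induction on $k$ as in the preceding Proposition applies verbatim, with the decaying factors $e^{rt/2}$ replaced by the quantities shown to tend to zero, yielding $|\nabla^k R| \to 0$ for all $k$.

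Finally, from $\frac{\partial u}{\partial t} = r - R = -R$ and the established decay $|R| \to 0$ with the uniform higher-derivative bounds, I would conclude that $u(t)$ converges in every $C^k$-norm to a limit $u_\infty$, so $g(t) = e^{u(t)} g_0 \to g_\infty = e^{u_\infty} g_0$, and the limit metric has constant curvature $R = r = 0$ with torsion. To obtain the claimed \emph{exponential} convergence rather than mere convergence, I expect the decisive step to be establishing that $\int_M R^2\,d\mu$ decays exponentially; this is where the torus case genuinely differs from $\chi < 0$, since the linearized operator has a zero mode corresponding to the scaling invariance. I would resolve this by working modulo the constant mode and invoking the spectral gap of $\Delta$ on $M$ restricted to mean-zero functions, which provides the missing exponential rate once $R$ is close enough to $0$.
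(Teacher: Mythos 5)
Your overall strategy matches the paper's: uniform pointwise bounds via the maximum principle, then $L^2$ integral estimates in which the spectral gap of the Laplacian supplies the exponential rate, then Sobolev embedding and higher-derivative estimates. The paper's Lemma~\ref{evolution-integrals-torus} and the inequality \eqref{inequality-nabla-delta-R} are exactly the "Poincar\'e-type coercivity" you invoke. However, two steps in your plan do not go through as written.

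First, testing $\frac{d}{dt}R=\Delta R+R^2$ against $R$ gives $\frac{d}{dt}\int_M R^2\,d\mu=-2\int_M|\nabla R|^2\,d\mu+\int_M R^3\,d\mu$, and the cubic term is only controlled by $C\int_M R^2\,d\mu$ with $C=\sup|R|$, which need not be smaller than the spectral gap; so this inequality does not "force $\int R^2$ to zero," and your two-step plan (convergence without rate, then exponential decay once $R$ is small) is left resting on an unproved first step. (It \emph{can} be repaired: $R>-1/t$ together with $\int_M R\,d\mu=0$ gives $\|R\|_{L^1}\to 0$, and interpolation with the uniform $\nabla R$ bound gives $\sup|R|\to 0$; but you did not make this argument.) The paper avoids the issue entirely with a one-step Lyapunov functional: since $\frac{d}{dt}\int_M|\nabla f|^2\,d\mu=-2\int_M R^2\,d\mu$ exactly, the combination $\alpha\int_M|\nabla f|^2\,d\mu+\beta\int_M R^2\,d\mu$ with $\alpha\gg\beta$ produces a term $-2\alpha\int_M R^2\,d\mu$ that absorbs $\beta\int_M R^3\,d\mu$ regardless of the size of $R$, while the spectral gap applied to $f$ (via $\int_M R^2\,d\mu=\int_M(\Delta f)^2\,d\mu\geq c_1\int_M|\nabla f|^2\,d\mu$) closes the differential inequality and yields exponential decay immediately.

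Second, the induction from the $\chi<0$ case does not apply "verbatim" when $r=0$: the damping terms $-(k+2)r|\nabla^kR|^2$ and the coefficient $-(\epsilon-r)$ that drive the pointwise maximum-principle decay all vanish, so that argument only yields boundedness of $|\nabla^kR|$, not decay. The paper instead continues the integral-estimate cascade ($\int|\nabla R|^2$, then $\int(\Delta R)^2$ and hence $\int|\nabla^2R|^2$ via \eqref{equality-nabla-delta-R}), each time using the spectral gap and Gronwall, and only then passes to pointwise decay by Sobolev embedding; higher derivatives are handled the same way. You should replace your "verbatim" claim with this integral bootstrapping.
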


The proof of Theorem \ref{theorem-zero-euler} is similar to the case of the standard Ricci flow.
By assumption we have zero Euler characteristic and thus the family of metrics \(g(t)\) is bounded uniformly.
First, we derive uniform bounds on the curvature, then we show that several integral norms of the curvature
decay exponentially. By the Sobolev embedding theorem we then conclude that the curvature has to decay exponentially itself.

Before we prove the main result we make the following observations.
\begin{Bem}
On the torus, one can construct plenty of solutions of the equation
\begin{align}
(-R_{g}+r_g-2\mathrm{div}_{g}V)g=L_Xg.
\end{align}
In fact, any non-flat metric on the torus possesses a conformal Killing vector field $X$ that is not Killing, i.e.
$L_Xg=f\cdot g$ for some nonzero $f\in C^{\infty}(M)$ with vanishing integral. Because the integral is vanishing, 
we can always find a vector field $V$ such that the equation $-R_{g}+r_g-2\mathrm{div}_{g}V=f$ is satisfied.

A solution of \eqref{ricci-flow} starting at such a metric $g$ is tangent to the orbit of the diffeomorphism group acting on $g$ at $t=0$
but we do not get a family of isometric metrics since the flow converges to a constant scalar curvature metric as $t\to\infty$ due to the main theorem.
\end{Bem}

\begin{Bem}
If we consider the conformal transformation \(g=e^uh\) for a given metric \(h\) and a function \(u\),
then finding a metric of zero curvature is equivalent to solving the following equation
\[
0=R_g=e^{-u}(-\Delta_hu+R_h+2\mathrm{div}_hV).
\]
However, the Poisson equation
\[
\Delta_hu=R_h+2\mathrm{div}_hV
\]
can always be solved since the right hand side has vanishing integral.
Due to this reason we expect convergence of \eqref{ricci-flow} in the case of zero Euler characteristic.
\end{Bem}

\begin{Lem}
Let \((M,g(t))\) with \(\chi(M)=0\) be a solution of \eqref{ricci-flow}. Then we have the uniform estimate
\begin{equation}
f^2+|\nabla f|^2+R^2+|\nabla R|^2\leq C.
\end{equation}
\end{Lem}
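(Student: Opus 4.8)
The plan is to prove the four bounds in sequence, exploiting that \(\chi(M)=0\) forces \(r=0\), so that the potential obeys the honest heat equation \(\partial_t f=\Delta f\) with \(R=\Delta f\). Two uniform ingredients from the previous section will be used throughout: the bound \(|f|\leq Ce^{rt}=C\) from \eqref{bound-f}, and the uniform estimate \(|\mathrm{div}V|\leq C\) valid for \(r\leq 0\). The pointwise surface inequality \(2|\nabla^2 f|^2\geq(\Delta f)^2=R^2\) will be the engine behind the curvature bound. The bound \(f^2\leq C\) is immediate from \eqref{bound-f} with \(r=0\). For \(|\nabla f|^2\), I would form \(H:=|\nabla f|^2+Af^2\) and combine \eqref{evolution-nablaf} with the evolution \(\frac{d}{dt}f^2=\Delta f^2-2|\nabla f|^2\) (both at \(r=0\)); choosing \(A\geq\sup|\mathrm{div}V|\) makes the coefficient \(2\mathrm{div}V-2A\) of \(|\nabla f|^2\) nonpositive, so \(\frac{d}{dt}H\leq\Delta H\) and the maximum principle gives \(|\nabla f|^2\leq H\leq H(0)\leq C\).

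The curvature bound is the crux, and I would route it through the potential rather than through \(R\) directly, where the reaction term \(R^2\) only yields finite-time control. Using the quantity \(G=|\nabla f|^2+\alpha f^2+\beta(R-r)\) introduced earlier, its evolution \eqref{evolution-G} at \(r=0\) reads
\[
\frac{d}{dt}G=\Delta G+(2\mathrm{div}V-2\alpha)|\nabla f|^2+\beta(\Delta f)^2-2|\nabla^2 f|^2.
\]
Choosing \(\beta\leq 1\) and \(\alpha\geq\sup|\mathrm{div}V|\), and invoking \(\beta(\Delta f)^2-2|\nabla^2 f|^2\leq(\beta-1)R^2\leq 0\), yields \(\frac{d}{dt}G\leq\Delta G\); the maximum principle bounds \(G\) uniformly, and since \(|\nabla f|^2,\alpha f^2\geq 0\) this forces \(\beta R\leq G\leq C\), i.e. the upper bound \(R\leq C\). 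The matching lower bound is supplied by the earlier lemma giving \(R>-1/t\) for \(r=0\) together with smoothness of the flow on the compact slab \(M\times[0,1]\), so that \(R^2\leq C\) uniformly.

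Finally, to control \(|\nabla R|^2\) I would couple it with \(R^2\) through \(\Phi:=|\nabla R|^2+\Lambda R^2\), using \eqref{evolution-nabla-R} together with \(\frac{d}{dt}R^2=\Delta R^2-2|\nabla R|^2+2R^3\). Taking \(\Lambda\) large enough that \(4R+2\mathrm{div}V-2\Lambda\leq -1\) — possible since \(R\) and \(\mathrm{div}V\) are now uniformly bounded — gives \(\frac{d}{dt}\Phi\leq\Delta\Phi-|\nabla R|^2+2\Lambda R^3\). The cubic term is controlled by a constant \(K\) using the bound \(|R|\leq C_0\) just obtained, and at a spatial maximum of \(\Phi\) one rewrites \(-|\nabla R|^2=-\Phi+\Lambda R^2\leq-\Phi+\Lambda C_0^2\), so that \(\frac{d}{dt}\Phi_{\max}\leq-\Phi_{\max}+(\Lambda C_0^2+K)\) and Hamilton's ODE comparison yields a uniform bound on \(\Phi\geq|\nabla R|^2\).

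The main obstacle is the curvature bound: the vanishing of \(r\) removes the favorable \(r(R-r)\) term that drove decay in the negative-characteristic case, so a maximum-principle argument cannot be closed on \(R\) or \(R^2\) in isolation. The resolution is to pass through the potential \(f\), where the surface identity \(2|\nabla^2 f|^2\geq R^2\) furnishes the missing good term, and to rely on the fact that \(r\leq 0\) makes \(|\mathrm{div}V|\) uniformly bounded; without the latter the coefficient \(2\mathrm{div}V-2\alpha\) could not be rendered nonpositive uniformly in time, and neither the gradient nor the curvature estimate would close.
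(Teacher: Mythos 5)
Your proof is correct, but it is organized differently from the paper's. The paper treats all four quantities at once: it sets \(H:=f^2+\alpha|\nabla f|^2+\beta_0R^2+\beta_1|\nabla R|^2\), computes \(\frac{d}{dt}H\le\Delta H+2|\nabla f|^2(\alpha\,\mathrm{div}V-1)+2|\nabla^2f|^2(\beta_0R-\alpha)+2|\nabla R|^2(2\beta_1R+\beta_1\mathrm{div}V-\beta_0)\), and concludes by ``adjusting the constants'' and applying the maximum principle once. Your staged bootstrap --- first \(f\) from \eqref{bound-f}, then \(|\nabla f|^2\) via \(|\nabla f|^2+Af^2\), then the upper bound on \(R\) via the quantity \(G\) that is \emph{linear} in \(R-r\) (so the cancellation \(\beta(\Delta f)^2-2|\nabla^2f|^2\le 0\) closes the estimate without any reference to a bound on \(R\)), the lower bound from \(R>-1/t\), and finally \(|\nabla R|^2\) coupled to \(R^2\) --- uses exactly the same evolution equations and the same two uniform inputs (\(|\mathrm{div}V|\le C\) for \(r\le0\) and \((\Delta f)^2\le 2|\nabla^2f|^2\)), but it buys something real: in the paper's one-shot argument the sign conditions \(\alpha\ge\beta_0R\) and \(\beta_0\ge 2\beta_1R+\beta_1\mathrm{div}V\) involve the very quantity \(R\) one is trying to bound, so making that argument rigorous requires either an a priori bound on \(R\) or precisely the kind of intermediate step you supply. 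Your route is therefore the more carefully justified one; the only cosmetic remark is that in the last step the passage from \(\frac{d}{dt}\Phi\le\Delta\Phi-|\nabla R|^2+K\) to an ODE for \(\Phi_{\max}\) is Hamilton's maximum principle in the form the paper uses implicitly throughout, so it fits the framework of the rest of the argument.
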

\begin{proof}
We set 
\[
H:=f^2+\alpha|\nabla f|^2+\beta_0R^2+\beta_1|\nabla R|^2
\]
with positive constants \(\alpha,\beta_0,\beta_1\) to be fixed later.
Using the evolution equations for the scalar curvature \eqref{evolution-R}, its derivative \eqref{evolution-nabla-R},
the curvature potential and its derivative \eqref{evolution-nablaf}, we find
\begin{align*}
\frac{d}{dt}H=&\Delta H+|\nabla f|^2(2\alpha\mathrm{div}V-2)+\beta_0R(\Delta f)^2-2\alpha|\nabla^2f|^2 \\
&+|\nabla R|^2(-2\beta_0+\beta_14R+2\beta_1\mathrm{div}V)-2\beta_1|\nabla^2R|^2 \\
\leq &\Delta H+2|\nabla f|^2(\alpha\mathrm{div}V-1)+2|\nabla^2f|^2(\beta_0R-\alpha)
+2|\nabla R|^2(2\beta_1R+\beta_1\mathrm{div}V-\beta_0).
\end{align*}
By adjusting the positive constants \(\alpha,\beta_0\) and \(\beta_1\) and application of the maximum principle 
we obtain the claim.
\end{proof}

Making use of the same method it is not difficult to also bound the \(k\)-th derivative of the curvature.
However, to achieve decay of the curvature we need to consider integral estimates.
By a direct calculation we obtain the following
\begin{Lem}
\label{evolution-integrals-torus}
Let \((M,g(t))\) with \(\chi(M)=0\) be a solution of \eqref{ricci-flow}.
Then the following equations hold:
\begin{align*}
\frac{d}{dt}\int_M|\nabla f|^2d\mu=&-2\int_MR^2d\mu,\\
\frac{d}{dt}\int_MR^2d\mu=&-2\int_M|\nabla R|^2d\mu+\int_M R^3d\mu, \\
\frac{d}{dt}\int_M|\nabla R|^2d\mu=&-2\int_M(\Delta R)^2d\mu-2\int_MR^2\Delta Rd\mu, \\
\frac{d}{dt}\int_M(\Delta R)^2d\mu=&-2\int_M|\nabla\Delta R|^2d\mu-2\int_M \Delta R\Delta R^2d\mu+\int_MR(\Delta R)^2d\mu.
\end{align*}
\end{Lem}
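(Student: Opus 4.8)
Throughout we use that \(\chi(M)=0\) forces \(r=0\), so the relevant evolution equations simplify to \(\partial_t R=\Delta R+R^2\) by \eqref{evolution-R}, while the curvature potential satisfies \(\Delta f=R\) by \eqref{curvature-potential} and \(\partial_t f=\Delta f=R\) by \eqref{evolution-f}. The plan is to differentiate each integral under the integral sign, keeping track of three contributions: the time derivative of the integrand, the evolution of the metric and of the operators \(\Delta,\mathrm{div}\) from Lemma \ref{variational-formulas}, and the evolution of the volume element, which along \eqref{ricci-flow} in dimension two is \(\frac{d}{dt}d\mu=(r-R)d\mu=-R\,d\mu\). At each fixed time I will integrate by parts freely, using that \(\Delta\) is self-adjoint and that \(\int_M\Delta\varphi\,d\mu=0\) for every function \(\varphi\). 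The recurring mechanism is that the cubic term produced by the volume factor \(-R\,d\mu\) partially cancels the cubic term coming from the nonlinearity \(R^2\) in \(\partial_t R\), and tracking these cancellations correctly is essentially the whole content of the computation.

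For the first identity the cleanest route is to rewrite \(\int_M|\nabla f|^2\,d\mu=-\int_M f\,\Delta f\,d\mu=-\int_M fR\,d\mu\) using \(\Delta f=R\). Differentiating and inserting \(\partial_t f=R\), \(\partial_t R=\Delta R+R^2\) and \(\frac{d}{dt}d\mu=-R\,d\mu\), the two \(\int_M fR^2\,d\mu\) terms cancel, while self-adjointness gives \(\int_M f\,\Delta R\,d\mu=\int_M R\,\Delta f\,d\mu=\int_M R^2\,d\mu\); collecting terms yields \(-2\int_M R^2\,d\mu\). One may instead differentiate \(\int_M|\nabla f|^2\,d\mu\) directly via \eqref{evolution-nablaf} together with the integration-by-parts identity \(\int_M|\nabla^2 f|^2\,d\mu=\int_M R^2\,d\mu-\tfrac12\int_M R_g|\nabla f|^2\,d\mu\), but this route is more laborious.

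The second and third identities follow by differentiating directly. For \(\int_M R^2\,d\mu\), the term \(2\int_M R\,\partial_t R\,d\mu=-2\int_M|\nabla R|^2\,d\mu+2\int_M R^3\,d\mu\) combines with the volume contribution \(-\int_M R^3\,d\mu\) to leave \(-2\int_M|\nabla R|^2\,d\mu+\int_M R^3\,d\mu\). For \(\int_M|\nabla R|^2\,d\mu=\int_M g^{ij}\partial_iR\,\partial_jR\,d\mu\), the contribution of \(\frac{d}{dt}g^{ij}=R\,g^{ij}\) equals \(+\int_M R|\nabla R|^2\,d\mu\) and cancels exactly against the volume contribution \(-\int_M R|\nabla R|^2\,d\mu\); what remains is \(2\int_M\langle\nabla\partial_t R,\nabla R\rangle\,d\mu=-2\int_M\partial_t R\,\Delta R\,d\mu\), and substituting \(\partial_t R=\Delta R+R^2\) produces \(-2\int_M(\Delta R)^2\,d\mu-2\int_M R^2\,\Delta R\,d\mu\).

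The fourth identity is the delicate one and I expect it to be the main obstacle, because here \(\Delta\) appears inside the quantity being differentiated, so its own evolution from Lemma \ref{variational-formulas} must be used: \(\frac{d}{dt}(\Delta R)=(R-r)\Delta R+\Delta(\partial_t R)=R\,\Delta R+\Delta^2 R+\Delta(R^2)\). Writing \(S:=\Delta R\) and differentiating \(\int_M S^2\,d\mu\), the term \(2\int_M RS^2\,d\mu\) coming from \(\frac{d}{dt}(\Delta R)\) combines with the volume contribution \(-\int_M RS^2\,d\mu\) to give \(\int_M R(\Delta R)^2\,d\mu\); integrating \(2\int_M S\,\Delta S\,d\mu=-2\int_M|\nabla\Delta R|^2\,d\mu\) by parts and keeping the remaining mixed term \(2\int_M \Delta R\,\Delta(R^2)\,d\mu\) yields the stated right-hand side. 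The only genuine pitfall throughout is the sign bookkeeping of the cubic and mixed terms, in particular the \((R-r)\Delta R\) contribution to \(\frac{d}{dt}(\Delta R)\), which is easy to overlook; once it is retained, every identity reduces to elementary integration by parts.
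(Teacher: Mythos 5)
Your proposal is correct and follows essentially the same route as the paper: differentiate under the integral, exploit $R=\Delta f=\partial_t f$ (since $r=0$), track the evolution of $g^{-1}$, $\Delta$ and $d\mu$, and integrate by parts; the paper only writes out the first two identities and dismisses the rest as ``similar''. Two remarks on signs, however. First, you use $\frac{d}{dt}d\mu=(r-R)\,d\mu$, which is indeed the correct variational formula along \eqref{ricci-flow} and is what the paper's own computation uses, even though Lemma \ref{variational-formulas} prints it with the opposite sign. Second, your derivation of the fourth identity produces $+2\int_M\Delta R\,\Delta(R^2)\,d\mu$, and that sign is correct (the term comes directly from $2\int_M\Delta R\cdot\Delta(\partial_tR)\,d\mu$ with $\partial_tR=\Delta R+R^2$, so nothing can flip it); the lemma as stated has $-2\int_M\Delta R\,\Delta R^2\,d\mu$, so your closing claim that the computation ``yields the stated right-hand side'' is not literally true. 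The discrepancy is a typo in the statement, propagated into the first display of the proof of Proposition \ref{prop-torus-decay-curvature} as $+2\int_M\langle\nabla R^2,\nabla\Delta R\rangle\,d\mu$, and it is harmless there because that mixed term is immediately estimated by Young's inequality, which is insensitive to its sign. You should simply record the corrected sign rather than assert agreement with the printed formula.
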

\begin{proof}
The key point is to use the equality $R=\Delta f=\partial_t f$ in a suitable way. To this end we calculate
\begin{align*}
\frac{d}{dt}\int_M|\nabla f|^2d\mu=&\int_MR|\nabla f|^2d\mu+\int_M(-R)|\nabla f|^2d\mu-2\int_M\frac{\partial f}{\partial t}\Delta fd\mu=-2\int_MR^2d\mu.
\end{align*}
By a similarly calculation we find
\begin{align*}
\frac{d}{dt}\int_M R^2d\mu=2\int_MR(\Delta R+R^2)d\mu-\int_M R^3d\mu=-2\int_M|\nabla R|^2d\mu+\int_MR^3d\mu
\end{align*}
and the other two equalities follow similarly.
\end{proof}

By interchanging covariant derivative and making use of the standard Ricci identity \eqref{ricci-identity-lc} we find
\begin{align}
\label{equality-nabla-delta-R}
\int_M|\nabla^2R|^2d\mu=\int_M(\Delta R)^2d\mu-\frac{1}{2}\int_MR_g|\nabla R|^2.
\end{align}
Moreover, we will make use of the following 
\begin{Lem}
Let \(f\) be a smooth function and \(c_1\) some positive constant.
Then the following inequality holds
\begin{align}
\label{inequality-nabla-delta-R}
\int_M(\Delta f)^2d\mu\geq c_1\int_M|\nabla f|^2d\mu.
\end{align}
\end{Lem}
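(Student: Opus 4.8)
The plan is to recognize \eqref{inequality-nabla-delta-R} as a Poincaré-type spectral inequality and to take for \(c_1\) the first nonzero eigenvalue \(\lambda_1\) of the Laplacian on the closed surface \((M,g)\). Both sides of the asserted inequality depend on \(f\) only through \(\nabla f\), and are therefore invariant under adding a constant to \(f\); so I may assume \(\int_M f\,d\mu=0\) without loss of generality, although this normalization is not strictly needed for the argument below. The whole point is that the constant \(c_1\) is not arbitrary but is supplied by the geometry: I claim \(c_1=\lambda_1>0\) works.

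Since \(M\) is closed, the operator \(-\Delta\) has discrete spectrum. I would fix an \(L^2\)-orthonormal basis of eigenfunctions \(\{\phi_k\}_{k\geq 0}\) with \(\Delta\phi_k=-\lambda_k\phi_k\) and \(0=\lambda_0<\lambda_1\leq\lambda_2\leq\cdots\to\infty\), and expand \(f=\sum_k a_k\phi_k\) in \(L^2\). Integrating by parts and using orthonormality, the two integrals become
\begin{align*}
\int_M|\nabla f|^2\,d\mu=\sum_{k\geq 0}\lambda_k a_k^2,\qquad \int_M(\Delta f)^2\,d\mu=\sum_{k\geq 0}\lambda_k^2 a_k^2.
\end{align*}
The conclusion then follows termwise: the \(k=0\) contribution vanishes on both sides because \(\lambda_0=0\), while for every \(k\geq 1\) one has \(\lambda_k\geq\lambda_1\) and hence \(\lambda_k^2 a_k^2\geq\lambda_1\,\lambda_k a_k^2\). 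Summing over \(k\) yields
\begin{align*}
\int_M(\Delta f)^2\,d\mu=\sum_{k\geq 1}\lambda_k^2 a_k^2\geq\lambda_1\sum_{k\geq 1}\lambda_k a_k^2=\lambda_1\int_M|\nabla f|^2\,d\mu,
\end{align*}
which is precisely \eqref{inequality-nabla-delta-R} with \(c_1=\lambda_1>0\). Equivalently, one could invoke the variational characterization of \(\lambda_1\) as the infimum of the quotient \(\int_M(\Delta g)^2\,d\mu\big/\int_M|\nabla g|^2\,d\mu\) over non-constant \(g\), which restates the same fact.

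For a fixed metric there is essentially no analytic obstacle; the entire content is the strict positivity \(\lambda_1>0\), guaranteed by compactness of \(M\) together with the fact that the kernel of \(-\Delta\) consists only of constants. The one point that genuinely requires care, and the one relevant for the way the lemma is applied along the flow in the case \(\chi(M)=0\), is \emph{uniformity in time}: the constant \(c_1=\lambda_1(g(t))\) must stay bounded below away from zero as the metric evolves. This is ensured by the uniform control of the family \(g(t)\) established above, since \(\lambda_1\) depends continuously on the metric and the conformal factors \(u(t)\) remain uniformly bounded. The main obstacle is thus not the inequality itself but securing this uniform lower bound, so that the subsequent integral estimates built on \eqref{inequality-nabla-delta-R} produce exponential decay rather than merely monotonicity.
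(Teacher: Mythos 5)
Your proof is correct and follows essentially the same route as the paper: both expand \(f\) in an \(L^2\)-orthonormal eigenbasis of the Laplacian and reduce the inequality to the termwise statement \(\lambda_k^2\geq c_1\lambda_k\), which holds with \(c_1=\lambda_1>0\). Your version is somewhat more explicit in identifying the optimal constant and in flagging the (genuinely relevant) issue of uniformity of \(\lambda_1(g(t))\) along the flow, which the paper leaves implicit.
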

\begin{proof}
We will prove the claim by a spectral decomposition.
Let \(f_i\) be the i-th eigenfunction of the Laplace operator, that is
\(\Delta f_i=-\lambda_if_i\), where \(\lambda_i\) denotes the i-th eigenvalue. 
We expand \(f=\sum_{i=0}^\infty a_if_i\), where \(a_i\) are real-valued coefficients. 
Making use of the \(L^2\)-orthogonality of the eigenfunctions of the Laplace operator 
the claim is equivalent to
\[
\sum_{i=0}^\infty a_i^2(\lambda_i^2-c_1\lambda_i)\geq 0.
\]
Due to the spectral properties of the Laplace operator it is easy to find some positive constant \(c_1\)
such that the above inequality holds, which finishes the proof.
\end{proof}

\begin{Prop}
\label{prop-torus-decay-curvature}
Let \((M,g(t))\) with \(\chi(M)=0\) be a solution of \eqref{ricci-flow}.
Then the scalar curvature converges exponentially to zero as \(t\to\infty\), that is
\[
|R|\leq Ce^{-Ct}.
\]
\end{Prop}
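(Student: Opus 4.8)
The plan is to use that $\chi(M)=0$ forces $r=0$, so that $\int_M R\,d\mu=4\pi\chi(M)=0$ along the entire flow, and that the metrics stay uniformly equivalent. For the latter I would observe that, since $r=0$, the bound \eqref{bound-f} reads $|f|\le C$, and comparing $\partial_t u=-(R-r)=-\Delta f$ with $\partial_t f=\Delta f$ gives $\partial_t(u+f)=0$, hence $u(\cdot,t)=f(\cdot,0)-f(\cdot,t)$ and $|u|\le C$ uniformly in $t$. Therefore all metrics $g(t)=e^{u}g_0$ are bi-Lipschitz comparable to $g_0$, which provides a $t$-uniform Poincar\'e inequality $\int_M|\nabla R|^2\,d\mu\ge\lambda_0\int_M R^2\,d\mu$ (using $\int_M R\,d\mu=0$) as well as $t$-uniform Sobolev and Gagliardo--Nirenberg constants.

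Next I would show $\int_M R^2\,d\mu\to 0$. The first identity of Lemma \ref{evolution-integrals-torus} gives
\begin{equation*}
\frac{d}{dt}\int_M|\nabla f|^2\,d\mu=-2\int_M R^2\,d\mu,
\end{equation*}
and since $\int_M|\nabla f|^2\,d\mu$ is nonnegative and bounded, integration in time yields $\int_0^\infty\big(\int_M R^2\,d\mu\big)\,dt<\infty$. The uniform bounds on $R$ and $\nabla R$ show via the second identity of Lemma \ref{evolution-integrals-torus} that $t\mapsto\int_M R^2\,d\mu$ has bounded derivative; a nonnegative, time-integrable function with bounded derivative must tend to $0$, so $\int_M R^2\,d\mu\to 0$.

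With $I_0:=\int_M R^2\,d\mu$ small for large $t$, I would upgrade to exponential decay. The second identity of Lemma \ref{evolution-integrals-torus} is $\dot I_0=-2\int_M|\nabla R|^2\,d\mu+\int_M R^3\,d\mu$, and the cubic term is estimated by the two-dimensional Gagliardo--Nirenberg inequality as
\begin{equation*}
\int_M R^3\,d\mu\le C\Big(\int_M|\nabla R|^2\,d\mu\Big)^{1/2}I_0+CI_0^{3/2}.
\end{equation*}
Absorbing the first summand with Young's inequality and using the uniform Poincar\'e inequality $\int_M|\nabla R|^2\,d\mu\ge\lambda_0 I_0$ gives
\begin{equation*}
\dot I_0\le -\frac{3}{2}\lambda_0 I_0+C\big(I_0^{1/2}+I_0\big)I_0,
\end{equation*}
so once $t$ is large enough that $C(I_0^{1/2}+I_0)\le\frac{1}{2}\lambda_0$ we obtain $\dot I_0\le-\lambda_0 I_0$ and hence $\int_M R^2\,d\mu\le Ce^{-\lambda_0 t}$.

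Finally I would bootstrap and pass to a pointwise bound. Using the last two identities of Lemma \ref{evolution-integrals-torus}, the interpolation identity \eqref{equality-nabla-delta-R}, the spectral inequality \eqref{inequality-nabla-delta-R} applied to $R$, and Gagliardo--Nirenberg to split off from each cubic term the now exponentially small factor $\|R\|_{L^2}=I_0^{1/2}$, the same differential-inequality argument shows that $\int_M|\nabla R|^2\,d\mu$ and $\int_M(\Delta R)^2\,d\mu$ --- and therefore $\int_M|\nabla^2 R|^2\,d\mu$ by \eqref{equality-nabla-delta-R} --- also decay like $e^{-ct}$. Since $\dim M=2$, the embedding $H^2(M)\hookrightarrow C^0(M)$ with $t$-uniform constant then yields $\|R\|_{C^0}\le C\|R\|_{H^2}\le Ce^{-ct}$, which is the claim. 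I expect the main obstacle to be exactly these nonlinear curvature terms: they are not dominated by the dissipation under the a priori bounds alone, and the crux is the two-step scheme of first extracting $I_0\to 0$ from time-integrability and only then converting the smallness of $I_0$ into a genuine exponential rate.
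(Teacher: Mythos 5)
Your proof is correct, and while its endgame (integral estimates for $\nabla R$ and $\Delta R$ via Lemma \ref{evolution-integrals-torus} and \eqref{equality-nabla-delta-R}, then Sobolev embedding $H^2\hookrightarrow C^0$) coincides with the paper's, the mechanism by which you obtain the crucial first estimate $\int_M R^2\,d\mu\leq Ce^{-Ct}$ is genuinely different. The paper forms the Lyapunov functional $\int_M(\alpha|\nabla f|^2+\beta R^2)\,d\mu$, linearizes the cubic term via the uniform pointwise bound $|R|\leq C$ (so $\int_M R^3\,d\mu\leq C\int_M R^2\,d\mu$), absorbs it by taking $\alpha>\beta C$, and applies \eqref{inequality-nabla-delta-R} to $f$ to close the Gronwall argument in one step. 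You instead run a two-stage scheme: first Barbalat's lemma (time-integrability of $\int_M R^2\,d\mu$ from the monotonicity of $\int_M|\nabla f|^2\,d\mu$, plus a bounded time-derivative) yields $\int_M R^2\,d\mu\to 0$; then Gagliardo--Nirenberg lets you absorb $\int_M R^3\,d\mu$ into the dissipation once $\|R\|_{L^2}$ is small, producing the exponential rate $\lambda_0$ tied to the Poincar\'e constant. Your route buys a rate that does not degrade with the size of the $C^0$ bound on $R$ and would survive with weaker a priori control; the paper's route is shorter and avoids the qualitative "first tend to zero" step. A further point in your favor: you actually justify the $t$-uniformity of the Poincar\'e/Sobolev constants via the identity $\partial_t(u+f)=0$, hence $u=f(\cdot,0)-f(\cdot,t)$ and $|u|\leq C$, whereas the paper only asserts that the metrics are uniformly bounded. (Minor caveat, shared with the paper: for a fully uniform $H^2\hookrightarrow C^0$ constant one also wants $C^1$ control on $u$, which your identity supplies since $|\nabla f|$ is uniformly bounded by the preceding lemma.)
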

\begin{proof}
First, we fix two positive numbers \(\alpha,\beta\) and calculate
\begin{align*}
\frac{d}{dt}\int_M(\alpha|\nabla f|^2+\beta R^2)d\mu=&-2\alpha\int_MR^2d\mu-\beta\int_M|\nabla R|^2d\mu+\beta\int_MR^3d\mu \\
\leq &-\alpha\int_M R^2d\mu-\alpha\int_M (\Delta f)^2d\mu+\beta C\int_M R^2d\mu\\
\leq &(\beta C-\alpha)\int_MR^2d\mu-c_1\alpha\int_M|\nabla f|^2d\mu,
\end{align*}
where we first used that \(R\) is bounded uniformly and then applied \eqref{inequality-nabla-delta-R} to the curvature potential.
Choosing \(\alpha>\beta C\) we obtain
\[
\int_M(\alpha|\nabla f|^2+\beta R^2)d\mu\leq Ce^{-Ct}.
\]
Using the evolution equation for \(|\nabla R|^2\) from Lemma \ref{evolution-integrals-torus} we find
\begin{align*}
\frac{d}{dt}\int_M|\nabla R|^2d\mu\leq-\int_M(\Delta R)^2d\mu+\int_MR^4d\mu \leq -c_1\int_M|\nabla R|^2d\mu+C\int_MR^2d\mu,
\end{align*}
where we used Young's inequality and \eqref{inequality-nabla-delta-R}. By the Gronwall inequality this implies
\[
\int_M|\nabla R|^2d\mu\leq Ce^{-Ct}.
\]
Using the evolution equation for \((\Delta R)^2\) from Lemma \ref{evolution-integrals-torus} we find
\begin{align*}
\frac{d}{dt}\int_M(\Delta R)^2d\mu=&-2\int_M|\nabla\Delta R|^2d\mu+2\int_M\langle\nabla R^2,\nabla\Delta R\rangle d\mu+\int_MR(\Delta R)^2d\mu\\
&\leq \int_M|\nabla R^2|^2d\mu+C_2\int_M(\Delta R)^2d\mu \\
&\leq C_1\int_M|\nabla R|^2d\mu+C_2\int_M(\Delta R)^2d\mu,
\end{align*}
where \(C_1,C_2\) are positive constants.
Again, we fix two positive numbers \(\alpha,\beta\) and calculate
\begin{align*}
\frac{d}{dt}\int_M(\alpha|\nabla R|^2+\beta(\Delta R)^2)d\mu \leq &-\alpha\int_M(\Delta R)^2d\mu-\alpha c_1\int_M|\nabla R|^2d\mu+\beta C_1\int_M|\nabla R|^2d\mu \\
&+\alpha C\int_M R^2d\mu+\beta C_2\int_M(\Delta R)^2d\mu.
\end{align*}
Now, we choose \(\alpha>\frac{\beta C_1}{c_1}+\beta C_2\), which implies 
\[
\frac{d}{dt}\int_M(\alpha|\nabla R|^2+\beta(\Delta R)^2)d\mu\leq -C\int_M(\alpha|\nabla R|^2+\beta(\Delta R)^2)d\mu+\alpha C\int_M R^2d\mu.
\]
Making use of the Gronwall inequality again and using the exponential decay of the \(L^2\)-norm of \(R^2\), we get
\[
\int_M(\Delta R)^2d\mu\leq Ce^{-Ct}.
\]
Moreover, by \eqref{equality-nabla-delta-R} this implies 
\[
\int_M|\nabla^2R|^2d\mu\leq Ce^{-Ct}.
\]
We now have achieved the exponential convergence of the \(L^2\)-norms of \(R,\nabla R\) and \(\nabla^2R\).
Thus, by the Sobolev embedding theorem, this yields the exponential convergence of \(R\).
\end{proof}

Using the method applied in the proof of Proposition \ref{prop-torus-decay-curvature} one can
also establish the exponential decay of higher derivatives of the curvature.
This finishes the proof of Theorem \ref{theorem-zero-euler}.

\section{Positive Euler Characteristic}
As for the standard Ricci flow the case of the sphere is the most complicated one.
However, several tools have been developed that ensure convergence of the standard Ricci flow.
These tools include the so-called \emph{surface entropy} and a \emph{differential Harnack estimate}.

Unfortunately, it turns out that all of these tools can no longer be applied in our case.
We have already seen that we have longtime existence, but the question of convergence is rather subtle.

\subsection{Linearization Stability and Instability}
\begin{Lem}
Let \(T(g)=(-R+r)g\) and \(\hat g\) such that \(R_{\hat g}=r\), which corresponds to a stationary point of \eqref{ricci-flow}.
Then 
\begin{align*}
\frac{d}{ds}\big|_{s=0}T((1+sh)\hat g)=(\Delta h+rh)\hat g,
\end{align*}
where \(h\in C^\infty(M)\).
\end{Lem}

\begin{proof}
This follows directly from the standard variational formulas
\begin{align*}
\frac{d}{ds}|_{s=0}R_{g+thg}=-\Delta h-R_gh\qquad \frac{d}{ds}|_{s=0}\mathrm{div}_{g+thg}V^{\flat}=-\mathrm{div}V^{\flat}h.
\end{align*}
\end{proof}

\begin{Prop}
On the sphere any stationary point except the round metric is linearly unstable.
\end{Prop}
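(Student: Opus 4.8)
The plan is to reduce linear (in)stability to a spectral comparison for a single scalar operator and then to invoke Hersch's eigenvalue inequality on $S^2$.

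First I would use the preceding Lemma: at a stationary point $\hat g$ (so that $R_{\hat g}=r$) the linearization of $T(g)=(-R+r)g$ along a conformal direction $h\hat g$ equals $(\Delta_{\hat g}h+rh)\hat g$. Hence the linearized flow for the conformal factor $h$ is governed by the operator $Lh=\Delta_{\hat g}h+rh$. I want to stress that the torsion does not enter $L$: it has been absorbed into the stationarity condition (and $V^\flat$ is held fixed along the flow), so to first order linear stability depends only on the underlying Riemannian metric $\hat g$ and on the constant $r=4\pi\chi(S^2)/\vol(S^2,\hat g)=8\pi/\vol(S^2,\hat g)$.

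Next I would identify the admissible perturbation space. Since the normalized flow preserves the volume, a genuine perturbation must preserve it to first order, i.e.\ $\int_M h\,d\mu=0$; the constant mode is merely an infinitesimal rescaling and is excluded by this constraint. On the mean-zero subspace $L$ is diagonalized by the nonconstant eigenfunctions of $\Delta_{\hat g}$, with eigenvalues $r-\lambda_i$ for $i\geq 1$, where $0<\lambda_1\leq\lambda_2\leq\cdots$ are the positive eigenvalues of $-\Delta_{\hat g}$. Therefore $\hat g$ is linearly stable exactly when $\lambda_1\geq r$, and linearly unstable as soon as $\lambda_1<r$, the unstable direction being the first eigenfunction.

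The decisive input is then Hersch's inequality: for every metric on $S^2$ one has $\lambda_1\,\vol(S^2,\hat g)\leq 8\pi$, with equality if and only if $\hat g$ is round. As $r\,\vol(S^2,\hat g)=8\pi$, this reads $\lambda_1\leq r$, with equality precisely for the round metric. Consequently every non-round stationary point satisfies $\lambda_1<r$, which produces the strictly positive eigenvalue $r-\lambda_1>0$ of $L$ on the mean-zero subspace, i.e.\ linear instability; for the round metric $\lambda_1=r$ and this direction degenerates into the kernel spanned by the conformal vector fields, consistent with its exceptional status. The main obstacle is conceptual rather than computational: correctly fixing the admissible (volume-preserving) perturbation space so that the constant mode is removed, and invoking the rigidity (equality) part of Hersch's theorem to separate the round metric from all others. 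A minor check is that the Lemma records the full first-order operator, i.e.\ that the prescribed evolution of the torsion contributes nothing at this order, which holds because $V^\flat$ is kept constant and the zeroth-order torsion term is annihilated by the stationarity assumption $R_{\hat g}=r$.
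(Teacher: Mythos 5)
Your argument is correct and follows the paper's proof essentially verbatim: restrict to mean-zero (volume-preserving) perturbations, observe that the linearization $h\mapsto\Delta h+rh$ has the positive eigenvalue $r-\lambda_1$ whenever $\lambda_1<r$, and conclude via Hersch's inequality $\lambda_1\leq 8\pi/\vol(S^2,g)=r$ together with its rigidity case singling out the round metric. Your additional remark that the torsion only enters through the stationarity condition $R_{\hat g}=r$ is a correct reading of the preceding Lemma and matches the paper's use of the variational formula for $\mathrm{div}_{g}V^{\flat}$.
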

\begin{proof}
Since the flow preserves the volume, we restrict to volume-preserving deformations of the metric and hence, we assume that $h$ has vanishing integral. 
The linearization operator (restricted to volume-preserving deformations) admits positive eigenvalues whenever $\mathrm{spec}(-\Delta)\cap (0,r)\neq \emptyset$. 
However, due to a theorem by Hersch \cite{hersch1970quatre}, the first nonzero eigenvalue of the Laplacian satisfies
\begin{align*}
\lambda_1\leq \frac{8\pi}{Vol(S^2,g)}=\frac{\int R\dv}{Vol(S^2,g)}=r
\end{align*}
for any metric on $S^2$ and equality only holds for the standard sphere.
\end{proof}

\begin{Bem}
The previous Proposition shows that we cannot expect convergence in general.
Our problem is similar to the prescribed curvature problem on \(S^2\), which can also be attacked by a flow approach \cite{MR2137948}.
In that reference it is shown that not every function can be prescribed on \(S^2\) and although our problem 
is slightly different we encounter similar difficulties here.

On the other hand, $\R P^2$ admits metrics such that $\lambda_1>r$ and thus these metrics are linearly stable. 
It is not clear whether one can expect convergence of the flow \eqref{ricci-flow} on $\R P^2$ in general.
\end{Bem}

\subsection{Self-similar solutions on the sphere}
In this subsection we study self-similar solutions of \eqref{ricci-flow} on the two-sphere,
which is important to understand the asymptotics of our flow.
In the case of the standard Ricci flow the only soliton is the round metric,
which follows by applying the Kazdan-Warner identity. It turns out that this identity 
does not give any information in our case and so we follow a different approach here
using ideas from dynamical systems theory.

\begin{Lem}\label{vconstant}
Let \(g\) be a self-similar solution of \eqref{ricci-flow} and \(v\in C^\infty(M)\) satisfying \(\Delta v=\mathrm{div}V^\flat\).
Then \(v\) is constant along the integral curves of \(X\).
\end{Lem}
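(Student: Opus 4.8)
The plan is to exploit that the soliton vector field $X$ is \emph{conformal} and that, in two dimensions, the Hodge star on one-forms is conformally invariant; this is what allows us to commute $L_X$ past the operators that build $v$ out of $V$. First I would take the trace of the soliton equation \eqref{self-similar-solution}. Since $\operatorname{tr}_g(L_Xg)=2\,\mathrm{div}X$ and $\operatorname{tr}_g g=2$, the traceless part of $L_Xg$ vanishes, so $X$ is a conformal vector field, $L_Xg=\phi\, g$ with $\phi=-R+r=\mathrm{div}X$. Because the flow $\psi_t$ of $X$ preserves the conformal class and the orientation, and the Hodge star on one-forms depends only on the conformal class in dimension two, $\psi_t^*$ commutes with $*$ on one-forms; differentiating at $t=0$ gives $L_X(*\alpha)=*(L_X\alpha)$ for every one-form $\alpha$. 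The Lie derivative commutes with $d$ in general.

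Next I set $\omega:=*V^{\flat}$. Combining the commutation just established with the hypothesis $L_XV^{\flat}=0$ from \eqref{self-similar-solution}, I get $L_X\omega=*(L_XV^{\flat})=0$. On an oriented surface one has the identity $\mathrm{div}V^{\flat}\cdot d\mu=d(*V^{\flat})=d\omega$, and by definition of $v$ also $\Delta v\cdot d\mu=\mathrm{div}V^{\flat}\cdot d\mu$, so $\Delta v\cdot d\mu=d\omega$. Applying $L_X$ and using $L_X\omega=0$ yields $L_X(\Delta v\, d\mu)=d(L_X\omega)=0$. On the other hand, writing $\Delta v\cdot d\mu=d*dv$ and commuting $L_X$ past $d$ and past $*$ on one-forms gives
\[
L_X(\Delta v\, d\mu)=d*d(Xv)=\Delta(Xv)\, d\mu,
\]
where I used $L_Xv=Xv$ for functions. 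Comparing the two computations forces $\Delta(Xv)=0$, so $Xv$ is constant on the closed surface. Note this conclusion is independent of the additive constant in $v$, since $X$ annihilates constants.

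Finally I would rule out a nonzero constant. On the sphere every vector field has a zero by Poincaré--Hopf (as $\chi(S^2)=2$), so $X$ vanishes at some point $p$; there $(Xv)(p)=dv_p(X_p)=0$, and hence the constant vanishes identically, i.e.\ $Xv\equiv 0$. This says precisely that $v$ is constant along the integral curves of $X$. The main point to get right is the conformal invariance of the Hodge star on one-forms, which is exactly what makes $L_X$ commute with the operators relating $v$ to $V$; the rest is a short diagram chase on differential forms, and the zero of $X$ is what upgrades ``$Xv$ constant'' to ``$Xv=0$''.
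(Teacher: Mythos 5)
Your proof is correct, and it arrives at the same key intermediate fact as the paper --- that $\Delta(Xv)=0$, hence $Xv$ is constant, hence zero --- but by a somewhat different route. The paper splits $V^{\flat}=dv+\omega$ with $\mathrm{div}_g\omega=0$, applies $L_X$ to get $0=d(Xv)+L_X\omega$, and uses the conformal variation formula for the divergence (Lemma~\ref{variational-formulas}) to show that $L_X\omega$ remains divergence-free; since a one-form that is both exact and divergence-free vanishes on a closed surface, $d(Xv)=0$. You avoid the decomposition entirely by observing that the Hodge star on one-forms is conformally invariant in dimension two, so $L_X$ commutes with $*$, and then chasing $L_X$ through $\Delta v\,d\mu = d(*V^{\flat}) = d*dv$. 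The two conformal-invariance inputs are equivalent (divergence-freeness of one-forms is a conformally invariant condition precisely because $\mathrm{div}\,\alpha\,d\mu=d*\alpha$ up to sign), so the arguments are dual formulations of one idea; yours is arguably cleaner in that it needs no appeal to the directness of the exact-plus-coclosed decomposition (the paper's ``linearly independent'' step) and no aside about $v$ being non-constant. The one place you are less general is the closing step: you kill the constant value of $Xv$ by producing a zero of $X$ via Poincar\'e--Hopf, which uses $\chi(S^2)\neq 0$, whereas the paper simply evaluates at an extremum of $v$, where $dv=0$ forces $Xv=0$; the latter works on any closed surface and needs nothing about $X$. Since the lemma sits in the subsection on the sphere this costs you nothing here, but the extremum argument is the more robust way to finish.
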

\begin{proof}
The one-form $V^{\flat}$ splits as $V^{\flat}=dv+\omega$ where $v\in C^{\infty}(M), \mathrm{div}_g\omega=0$ and $v$ is non-constant since $\mathrm{div}_{g}V$ must be nonzero. 
If $\mathrm{div}_gV$ was zero, we had a self-similar solution of the standard Ricci flow and so it must be constant. Now we have
\begin{align*}0=L_XV^{\flat}=L_Xdv+L_X\omega=d(X(v))+L_X\omega
\end{align*}
and
\begin{align*}
\mathrm{div}_g(L_X\omega)=X(\mathrm{div}_g\omega)-\mathrm{div}_{L_Xg}\omega=-\mathrm{div}_{L_Xg}\omega=-\mathrm{div}_{h\cdot g}\omega=h\cdot\mathrm{div}_g\omega=0.
\end{align*}
Thus, $d(X(v))$ and $L_X\omega$ are both linearly independent hence they both vanish. 
Thus, $X(v)$ is constant and hence it must be equal to zero since this expression vanishes at the extrema of $v$. 
\end{proof}

\begin{Prop}
The round sphere cannot be a self-similar solution of \eqref{ricci-flow}. 
\end{Prop}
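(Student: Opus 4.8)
The plan is to argue by contradiction, assuming the round sphere carries a genuine vectorial torsion (so that $\mathrm{div}_gV\not\equiv0$; if $\mathrm{div}_gV\equiv0$ the flow reduces to the standard normalized Ricci flow, for which the round metric is of course a fixed point) and that it is a self-similar solution with associated vector field $X$. First I would rescale so that $(S^2,g)$ is the unit round sphere, on which $R_g=r$ is constant. Since the Levi-Cevita curvature then contributes only this constant, the self-similar equation \eqref{self-similar-solution} collapses to $L_Xg=-2\,\mathrm{div}_gV\cdot g$. In particular $X$ is a conformal Killing field whose conformal factor is a nonzero multiple of $\mathrm{div}_gV$, and it is genuinely non-Killing because $\mathrm{div}_gV\not\equiv0$.

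Next I would exploit the rigidity of the conformal group of the round sphere: every conformal Killing field splits as $X=K+\nabla\phi$ with $K$ Killing and $\phi$ a first eigenfunction, $\Delta\phi=-2\phi$, satisfying Obata's relation $\nabla^2\phi=-\phi\,g$. Since $L_Kg=0$ and $L_{\nabla\phi}g=2\nabla^2\phi=-2\phi\,g$, comparing conformal factors yields $\mathrm{div}_gV=\phi$. Writing $V^\flat=dv+\omega$ with $\mathrm{div}_g\omega=0$ as in Lemma \ref{vconstant}, so that $\Delta v=\mathrm{div}_gV=\phi$, and using $\Delta\phi=-2\phi$, I obtain $v=-\tfrac12\phi$ up to an additive constant, hence $\nabla v=-\tfrac12\nabla\phi$.

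The contradiction then comes from Lemma \ref{vconstant}, which forces $X(v)=0$. Expanding gives $0=X(v)=K(v)+\langle\nabla\phi,\nabla v\rangle=-\tfrac12K(\phi)-\tfrac12|\nabla\phi|^2$, that is $K(\phi)+|\nabla\phi|^2=0$ pointwise. Integrating over $S^2$ and using that $K$ is divergence-free, so that $\int_{S^2}K(\phi)\,d\mu=-\int_{S^2}\phi\,\mathrm{div}_gK\,d\mu=0$, leaves $\int_{S^2}|\nabla\phi|^2\,d\mu=2\int_{S^2}\phi^2\,d\mu=0$, which forces $\phi\equiv0$ and hence $\mathrm{div}_gV\equiv0$, contradicting the standing assumption of genuine torsion.

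I expect the main obstacle to be the reduction step together with the invocation of the conformal structure: one must verify carefully that on the round sphere the constant-curvature term produces only a constant contribution which must cancel, so that the entire nonconstant part of $L_Xg$ is captured by $-2\,\mathrm{div}_gV$, and that no surviving homothetic component is possible on the compact sphere. Granting the classification of conformal Killing fields on $S^2$ together with Obata's Hessian identity, the remainder is a short integration by parts, and the constraint $L_XV^\flat=0$ enters precisely through Lemma \ref{vconstant} to close the argument.
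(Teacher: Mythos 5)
Your proof is correct, but it reaches the contradiction by a genuinely different route than the paper. The paper's proof is dynamical: it takes $X=\grad h$ with $h$ the restriction of a linear function, observes that every integral curve of $X$ is a half great-circle joining the two poles, and concludes from Lemma~\ref{vconstant} that $v$ is constant on all of $S^2$, a contradiction. You instead use the algebraic structure of the conformal group of the round sphere: the splitting $X=K+\nabla\phi$ with $K$ Killing and $\nabla^2\phi=-\phi\,g$ lets you identify $\mathrm{div}_gV=\phi$ and $v=-\tfrac12\phi+\mathrm{const}$, after which $X(v)=0$ integrates (using $\mathrm{div}_gK=0$) to $\int_{S^2}|\nabla\phi|^2\,d\mu=0$ and hence $\mathrm{div}_gV\equiv0$. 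Your version buys two things: it explicitly handles a possible Killing component of $X$, which the paper's proof silently discards even though the constraint $L_XV^{\flat}=0$ is not invariant under adding a Killing field to $X$; and it replaces the qualitative ``constant along all integral curves, hence constant by continuity'' step with a one-line integration by parts. The paper's argument, on the other hand, is softer and is the template reused in the subsequent proposition about gradient solitons, where no explicit eigenfunction structure is available. Both proofs rest on the same key input, Lemma~\ref{vconstant}, and both carry the same implicit standing assumption that $\mathrm{div}_gV\not\equiv0$ (otherwise the round metric is a trivial stationary point), which you state explicitly and the paper buries in the proof of that lemma.
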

\begin{proof}
In this case, $X=\grad h$ and the function $h$ is a restriction of a linear function in $\R^3$. 
Without loss of generality, assume that $h$ is the projection to the $z$-coordinate. 
Then the critical points of $X$ are the poles and the integral curves are the half great-circles connecting the poles.
If $v$ is constant along these curves it must be constant on the whole manifold by continuity which causes a contradiction.
\end{proof}

\begin{Prop}
If $(S^2,g)$ is a nontrivial soliton, then it can neither be a gradient soliton nor a soliton corresponding to a divergence-free vector field.
\end{Prop}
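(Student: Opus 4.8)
The plan is to split into the two cases named in the statement, treating the divergence-free case by a trace identity and the gradient case by the dynamical argument already used for the round sphere. Assume first that the soliton vector field $X$ is divergence-free. Writing the self-similar equation \eqref{self-similar-solution} as $(r-R)g=L_Xg$ and taking the $g$-trace, one uses $\operatorname{Tr}_g(L_Xg)=2\operatorname{div}_gX$ to obtain $\operatorname{div}_gX=r-R$. Hence $\operatorname{div}_gX\equiv 0$ forces $R\equiv r$, so $L_Xg=0$ and $g$ is an honest fixed point of \eqref{ricci-flow}, contradicting nontriviality; this case is immediate.

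Now assume $X=\grad h$ is a gradient field. Since $L_{\grad h}g=2\nabla^2 h$, equation \eqref{self-similar-solution} becomes $\nabla^2 h=\tfrac{1}{2}(r-R)g$, so the trace-free Hessian of $h$ vanishes and $h$ is a concircular function on $(S^2,g)$. The first step is to determine the critical-point structure of $h$: at any critical point the Hessian equals $\tfrac{1}{2}(r-R)g$, a multiple of the positive-definite metric, hence never indefinite, so $h$ has no saddle points. Combined with $\chi(S^2)=2$, the Poincar\'e--Hopf index theorem then forces exactly one maximum $p_+$ and one minimum $p_-$. Moreover the identity $\grad(|\grad h|^2)=2\nabla^2 h(\grad h)=(r-R)\grad h$ shows that $|\grad h|^2$ is a function of $h$ alone, so its only zeros occur at the extreme values of $h$; consequently every integral curve of $\grad h$ runs from $p_-$ to $p_+$ and these curves foliate $S^2\setminus\{p_-,p_+\}$, exactly as in the round case.

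It then suffices to invoke Lemma \ref{vconstant}: the function $v$ with $\Delta v=\operatorname{div}_gV^\flat$ is constant along each integral curve of $X$. Since every such curve limits onto the two common endpoints $p_-$ and $p_+$, continuity there forces the value of $v$ to coincide on all curves, so $v$ is constant on the dense set $S^2\setminus\{p_-,p_+\}$ and hence everywhere. This gives $\operatorname{div}_gV=\Delta v=0$, contradicting the fact, already recorded in the proof of Lemma \ref{vconstant}, that a nontrivial soliton must satisfy $\operatorname{div}_gV\neq 0$.

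The main obstacle lies entirely in the gradient case, specifically in making the critical-point count rigorous at degenerate critical points, i.e.\ where $R=r$ and the Hessian vanishes: there the clean picture of a single maximum and minimum cannot be read off from nondegeneracy and must instead be extracted from the level-set structure of the concircular function $h$, or from the classification of concircular scalar fields on closed surfaces. The divergence-free case, by contrast, reduces to the single trace computation above.
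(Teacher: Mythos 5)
Your proposal follows essentially the same route as the paper: the divergence-free case is dispatched by the identical trace computation, and the gradient case runs through Lemma \ref{vconstant} applied to the integral curves of $X=\grad h$, concluding $\di_g V=\Delta v=0$ and hence triviality via the classification of standard Ricci solitons on $S^2$. The one substantive difference is that you try to pin down the critical-point structure of $h$ exactly (no saddles, hence by Poincar\'e--Hopf a single maximum and a single minimum, all flow lines sharing the same two endpoints), and this is precisely where your self-identified gap sits: at a critical point where $R=r$ the Hessian $\tfrac12(r-R)g$ vanishes and the index argument does not immediately rule out degenerate saddle-type zeros. That detour is unnecessary. The paper only uses that the zeros of a conformal Killing field are isolated (hence finite on the compact $S^2$) and that every integral curve of a gradient field on a compact manifold limits to critical points as $t\to\pm\infty$; by Lemma \ref{vconstant} and continuity, $v(p)$ then equals the value of $v$ at some zero $p_i$, so $v$ has finite image, and a continuous function with finite image on a connected space is constant. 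No count of maxima and minima, and no common pair of endpoints, is needed. If you replace your Poincar\'e--Hopf step by this finite-image argument, your proof closes completely and coincides with the paper's.
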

\begin{proof}
Taking the trace of the equation $(-R+r)g=L_Xg$, we get $(-R+r)=\mathrm{div}X$ and thus, the soliton must be trivial if $\mathrm{div}X=0$.

Now consider the case where $X$ is a gradient vector field.
For a conformal Killing vector field the zeros are isolated \cite{MR0350663}. Let $X$ be such that $(-R+r)g=L_Xg$ and let $p_1,\ldots p_k$ be the zeros of $X$. Let $v$ be a smooth function such that $\Delta v=\mathrm{div}V^{\flat}$ and $a_i:=v(p_i)\in\R$. Let $p\in S^2$ be arbitrary and $\varphi_t(p)$, $t\in \R$ be the integral curve of $X$ with $\varphi_0(p)=p$. Since $X$ is a gradient vector field, $\varphi_t(p)$ joins two of the $p_i$ and because of Lemma \ref{vconstant}, $v(p)=a_i$ for some $i\in \left\{1,\ldots,k\right\}$. Since $p$ was arbitrary, this implies that $v$ is piecewise constant and since $S^2$ is connected, $v$ must be constant. 

It follows that $R_g=R$ because $\mathrm{div}V=\Delta v=0$ so that $g$ is a soliton of the standard Ricci flow. But any such soliton must be trivial, see e.g. \cite[Proposition 5.21]{MR2061425}.
\end{proof}
\begin{Bem}
By continuity one can show that any vector field close to a gradient field cannot appear on the right hand side of the soliton equation \eqref{self-similar-solution}.
This again follows from Lemma \ref{vconstant} since the qualitative behavior of the dynamical system generated by \(X\) does not change
under small perturbations.
We conjecture that there exist no nontrivial solitons at all.
\end{Bem}

\bibliographystyle{plain}
\bibliography{mybib}
\end{document}